\newtheorem{dummy}{dummy}[section]
\newtheorem{lemma}[dummy]{Lemma}
\newtheorem{theorem}[dummy]{Theorem}
\newenvironment{customthm}[1]
{\innercustomthm}
  {\endinnercustomthm}
\newtheorem{conjecture}[dummy]{Conjecture}
\newtheorem{corollary}[dummy]{Corollary}
\newtheorem{proposition}[dummy]{Proposition}
\newtheorem*{theorem*}{Theorem}
\newtheorem{definition}[dummy]{Definition}
\newtheorem*{definition*}{Definition}
\newtheorem{remark}[dummy]{Remark}
\newtheorem{remark*}{Remark}
\newcommand{\mHH}{\mathrm{HH}}
\newcommand{\bA}{\mathbb{A}}
\newcommand{\bG}{\mathbb{G}}
\newcommand{\bZ}{\mathbb{Z}}
\newcommand{\cC}{\mathcal{C}}
\newcommand{\cS}{\mathcal{S}}
\newcommand{\cO}{\mathcal{O}}
\newcommand{\cX}{\mathcal{X}}
\newcommand{\cE}{\mathcal{E}}
\newcommand{\cN}{\mathcal{N}}
\newcommand{\cHH}{\mathcal{HH}}
\newcommand{\cM}{\mathcal{M}}
\newcommand{\Aff}[1]{\mathrm{Aff}(#1)}
\newcommand{\Spec}{\mathrm{S}\mathrm{pec}\,}
\newcommand{\CAlg}{\mathrm{CAlg}}
\newcommand{\Mod}{\mathrm{Mod}}
\newcommand{\coMod}{\mathrm{coMod}}
\newcommand{\colim}{\mathrm{colim}}
\newcommand{\Hom}{\mathrm{Hom}}
 \newcommand{\twocell}[1]{\ar@{}[#1]^(.30){}="a"^(.70){}="b" \ar@{=>} "a";"b"}
 \newcommand{\ocell}[1]{\ar@{}[#1]^(.30){}="a"^(.70){}="b" \ar@{=} "a";"b"}
\newcommand{\QCoh}{\mathrm{QCoh}}
\begin{document}

\author[Scherotzke]{Sarah Scherotzke}
\address{Sarah Scherotzke, University of Luxembourg\\
6, avenue de la Fonte\\
L-4364 Esch-sur-Alzette\\
Luxembourg}
\email{\href{mailto:sarah.scherotzke@uni.lu}{sarah.scherotzke@uni.lu}}

\author[Sibilla]{Nicol\`o Sibilla}
\address{Nicol\`o Sibilla, SISSA\\ 
Via Bonomea 265\\ 34136 Trieste TS\\
Italy}
\email{\href{mailto:nsibilla@sissa.it}{nsibilla@sissa.it}}

\author[Tomasini]{Paolo Tomasini}
\address{Paolo Tomasini, University of Luxembourg\\
6, avenue de la Fonte\\
L-4364 Esch-sur-Alzette\\
Luxembourg}
\email{\href{mailto:paolo.tomasini@uni.lu}{paolo.tomasini@uni.lu}}

\title[Fourier--Mukai equivalences for formal groups and elliptic Hochschild homology]{Fourier--Mukai equivalences for formal groups and elliptic Hochschild homology}


\begin{abstract}
This paper establishes a unifying framework for various forms of twisted Hochschild homology by comparing two definitions of elliptic Hochschild homology: one introduced by Moulinos--Robalo--To\"en and the other by Sibilla--Tomasini. Central to our approach is a new Fourier--Mukai duality for formal groups. We prove that when $\widehat{E}$ is the formal group associated to an elliptic curve $E$, the resulting $\widehat{E}$-Hochschild homology coincides with the mapping stack construction of Sibilla--Tomasini. This identification also recovers ordinary and Hodge Hochschild homology as degenerate limits corresponding to nodal and cuspidal cubics, respectively. Building on this, we introduce global versions of elliptic Hochschild homology over the moduli stacks of elliptic and cubic curves, which interpolate between these theories and suggest a universal form of TMF-Hochschild homology.
\end{abstract}

\maketitle

\tableofcontents
\setcounter{tocdepth}{3}

\section{Introduction}
In the beautiful paper \cite{MRT} the authors investigate the behaviour of Hochschild homology in mixed characteristics.  One of their discoveries is that, at least for affine derived schemes, it is possible to define twisted versions of Hochschild homology that depend on the choice of a one-dimensional formal group $\mathbb{G}$. This gives rise in particular to a notion of elliptic Hochschild homology, when  $\mathbb{G}$ is the completion of an elliptic curve $E$ at the identity section. In independent work, the two last named authors  gave a definition of elliptic Hochschild homology in \cite{sibilla2023equivariant}; see also the recent paper \cite{bouaziz2025elliptic} for a very interesting alternative construction which is equivalent to the one from \cite{sibilla2023equivariant}. One of the sources of interests of elliptic Hochschild homology as defined in \cite{sibilla2023equivariant} is that the authors prove that, when working over $\mathbb{C}$, it recovers the complexified (equivariant) elliptic cohomology of the analytification, when turning on a natural circle action. 

 One of the main goals of this article is showing that the definition of elliptic Hochschild homology given in \cite{MRT} and in \cite{sibilla2023equivariant} agree. The way we achieve this is by revisiting the construction from \cite{MRT} through the lenses of a Fourier--Mukai duality for formal groups which we introduce, and which we believe to be of independent interest.  This yields an alternative definition $\mathbb{G}$-Hochschild homology in the sense of \cite{MRT}, for any formal group $\mathbb{G}$, which is arguably more concrete. When $\mathbb{G}=\widehat{E}$ is the completion of an elliptic curve, this is the key ingredient in the comparison with the definition from \cite{sibilla2023equivariant}. The other technical input is Lurie's theory of Fourier--Mukai duality for dual abelian varieties over an arbitrary  
 $\mathbb{E}_\infty$-ring. 
 
Building on our results, we give a definition of TMF-Hochschild homology as a universal form of elliptic Hochschild homology living over the moduli space of elliptic curves;  or, in the tmf-variant, over the moduli of cubic curves. These theories interpolate between elliptic Hochschild homologies for different elliptic curves, and also incorporate degenerations to ordinary Hochschild homology and Hodge Hochschild homology, i.e. the graded algebra of global differential forms. The authors in \cite{MRT}, and Moulinos in \cite{moulinos2024filtered}, explore the rich geometry of filtrations of $\mathbb{G}$-Hochschild homology. The key object in their theory is the $\mathbb{G}$-filtered circle, a stack over $[\mathbb{A}^1/\mathbb{G}_m]$, which encodes a filtration on  $\mathbb{G}$-Hochschild homology whose associated graded is Hodge Hochschild homology; in fact in \cite{MRT} this is only done when $\mathbb{G}$ is equal to the completion of the multiplicative group, whereas the general picture was established by Moulinos in terms of a general theory of filtered formal groups. We argue that when $\mathbb{G}$ is the completion of either $\mathbb{G}_a$, 
 $\mathbb{G}_m$ or an elliptic curve $E$, it should be possible to model the  $\mathbb{G}$-filtered circle in terms of the moduli theory of cubic curves. A precise conjecture is stated in the last section of the paper.
 
 \subsection*{Our results} 
 We work over an arbitrary abelian commutative ring $R$. To explain our results, we need to briefly recall  the setting of \cite{MRT} and \cite{moulinos2024filtered}.  The definition of $\mathbb{G}$-Hochschild homology depends on  Cartier duality theory. Over a field this is classical \cite{cartier1962groupes}, and can be formulated as an anti-equivalence between the category of abelian affine group schemes and abelian formal groups. Over an arbitrary ring $R$ this is more subtle, but we can nonetheless associate to a one-dimensional abelian formal group $\mathbb{G}$ a one-dimensional affine group scheme 
 $\mathbb{G}^\vee$. Let $S^1_{\mathbb{G}}:=B\mathbb{G}^\vee$. The authors in \cite{MRT} define  the $\mathbb{G}$-Hochschild homology of an affine scheme $X$ as 
 $$
 \mathrm{HH}_*^{\mathbb{G}}(X) :=\mathcal{O}(\mathrm{Map}_{\mathrm{dSt}_R}(S^1_{\mathbb{G}}, X))
 $$
 where $\mathrm{Map}_{\mathrm{dSt}_R}$ denotes the mapping stack. If $\mathbb{G} = \widehat{\mathbb{G}_m}$ is the completion of the multiplicative group then $
 \mathrm{HH}_*^{\mathbb{G}}(X)$ is equivalent to ordinary Hochschild homology; whereas when $\mathbb{G}=\widehat{\mathbb{G}_a}$ is the completion of the additive group, we obtain Hodge Hochschild homology, i.e. 
 $$
 \mathrm{HH}_*^{\widehat{\mathbb{G}_a}}(X) \simeq \mathcal{O}(\mathbb{T}_X[-1]) \simeq \bigoplus_{i \in \mathbb{N}} \mathbb{L}^{\wedge^i}[i]
 $$
 Our first main result is a kind of Fourier--Mukai duality relating a formal group 
$\mathbb{G}$ and the $\mathbb{G}$-twisted affine circle 
$S^1_{\mathbb{G}}$.
 \begin{customthm}{A} 
\label{intromain1}
There is an equivalence of symmetric monoidal $\infty$-categories
$$\mathrm{F}_{\mathbb{G}}: \QCoh(\bG)^{\star}\simeq\QCoh(S^1_\bG)^\otimes$$
where $\star$ denotes the convolution tensor product, and $\otimes$ the ordinary tensor product. As a consequence, there is an equivalence 
$$
S^1_\bG \simeq    \mathrm{coSpec}( \mathrm{Hom}_{\QCoh(\bG)}(\mathcal{O}_e,\mathcal{O}_e)) 
$$
where $\mathrm{Hom}_{\QCoh(\bG)}(\mathcal{O}_e,\mathcal{O}_e)$ is the endomorphism algebra of the skyscraper sheaf at the identity $e \in \mathbb{G}$ equipped with its natural enhancement to an  $\mathbb{E}_\infty$-algebra. 
 \end{customthm}
 In the statement,  the symbol $\mathrm{coSpec}$ denotes the cospectrum of a coconnective commutative algebra; we refer the reader to the main text for more information on this important construction. 
 
Now, let $E$ be an elliptic curve over $R$. Following \cite{sibilla2023equivariant}, the elliptic Hochschild homology of an affine scheme $X$ over $R$ is defined as $$
 \mathcal{HH}_E(X) := \mathcal{O}(\mathrm{Map}_{\mathrm{dSt}_R}(E, X))
 $$
 Theorem \ref{intromain1} is the main ingredient in the proof of the following comparison result.
 \begin{customthm}{B} 
\label{intromain2}
 Let $\widehat{E}$ be the completion of $E$ at the identity. Then there is a natural equivalence
 $$
 \mathrm{HH}_*^{\widehat{E}}(X) \simeq  \mathcal{HH}_E(X)  
 $$
 Additionally, let $\mathcal{N}$ and $\mathcal{C}$ be respectively the nodal and cuspidal cubic curve over $R$. Then there are natural equivalences
 $$
 \mathrm{HH}_*(X) \simeq \mathrm{HH}_*^{\widehat{\mathbb{G}_m}}(X) \simeq \mathcal{O}(\mathrm{Map}_{\mathrm{dSt}_R}(\mathcal{N}, X)), \quad 
 \mathrm{HH}_*^{\widehat{\mathbb{G}_a}}(X) \simeq \mathcal{O}(\mathrm{Map}_{\mathrm{dSt}_R}(\mathcal{C}, X))
 $$
 \end{customthm}
Using Theorem \ref{intromain2}  we define global versions of elliptic Hochschild homology which interpolate  between different $\widehat{E}$-Hochschild homologies, as $E$ varies in a moduli stack. More precisely, we describe three variants of this construction corresponding to the three main deformations theories parametrising elliptic curves and their degenerations: namely the moduli stack of elliptic curves $\mathcal{M}_{ell}$, its modular  compactification $\overline{\mathcal{M}_{ell}}$, and the moduli stack of cubics $\mathcal{M}_{cub}$. We denote the resulting global Hochschild homology theories by 
 $$
  \mathcal{HH}_*^{TMF}(X), \quad \mathcal{HH}_*^{Tmf}(X), \quad \text{and} \quad \mathcal{HH}_*^{tmf}(X)
 $$
The notations are meant to be suggestive of the close link between these theories and the $R$-linearization of TMF and its variants, as studied in the homotopy theory literature.   
 \begin{customthm}{C} 
\label{intromain3}
 The  stalks of $ 
  \mathcal{HH}_*^{TMF}(X)$, $\mathcal{HH}_*^{Tmf}(X)$ and  $\mathcal{HH}_*^{tmf}(X)
 $ at a point $C$ of the corresponding moduli stack are equivalent to  
 \begin{itemize}
 \item $ \mathrm{HH}_*^{\widehat{E}}(X)$ if $C=E$ is an elliptic curve 
 \item $\mathrm{HH}_*^{\widehat{\mathbb{G}_m}}(X)$ if $C=\mathcal{N}$ is the nodal cubic curve
 \item and to $\mathrm{HH}_*^{\widehat{\mathbb{G}_a}}(X)$ if $C=\mathcal{C}$ is the cuspidal cubic curve.
 \end{itemize} 
 \end{customthm}
 
\subsection{Future work}
The theory of $\mathbb{G}$-Hochschild homology as developed in  \cite{MRT} and \cite{moulinos2024filtered} is meaningful in the affine setting, but breaks down when applied to more general geometric objects.  Informally, we could say that the twisted circle $S^1_{\mathbb{G}}$ is too small to see complicated global geometries, in particular  when  stackyness comes in to play. 
This is true already for $\mathbb{G}:=\widehat{\mathbb{G}_{m}}$. As we explained  
 $\mathrm{HH}^{\widehat{\mathbb{G}_m}}$ should coincide  with ordinary Hochschild homology, but in fact this holds only for affine schemes. In general, if $X$ is a stack
 $$
 \mathrm{HH}^{\widehat{\mathbb{G}_m}}_*(X) \simeq \mathcal{O}(\mathrm{Map}_{\mathrm{dSt}_R}(\mathrm{Aff}(S^1_{\mathrm{B}}), X)) \quad \text{and} \quad 
 \mathrm{HH}_*(X) \simeq \mathcal{O}(\mathrm{Map}_{\mathrm{dSt}_R}(S^1_{\mathrm{B}}, X))
 $$
 are genuinely different objects; in fact, under suitable assumptions, $\mathrm{HH}^{\widehat{\mathbb{G}_m}}_*(X)$ can be recovered as a completion of $ \mathrm{HH}_*(X)$.

 Theorem \ref{intromain2}  suggests a definition of $\mathbb{G}$-Hochschild homology when $\mathbb{G}$ is either $\widehat{\mathbb{G}_a}$, $\widehat{\mathbb{G}_a}$ or $\widehat{E}$ which is exempt from this limitation.  
   In this paper we shall   place ourselves in the setting of  \cite{MRT} and \cite{moulinos2024filtered}, and therefore limit our attention to affine schemes. Forthcoming work of the third author with Forero Pulido \cite{ChristianPaolo} will explore extension of this story to the equivariant setting.  
   In particular, as already mentioned, one of the main results of \cite{sibilla2023equivariant} is a comparison (over $\mathbb{C}$) between the equivariant elliptic periodic cyclic homology and the complexified equivariant elliptic cohomology of the analytification. It is an interesting problem to generalize this picture to complexified equivariant TMF. This is accomplished in the forthcoming work \cite{ChristianPaolo} by generalizing the results of this paper to quotient stacks.

   {\bf Acknowledgments:}  The beginnings of this project go back to a week-long visit of Tasos Moulinos to SISSA in 2022. We thank Tasos for many inspiring discussions   and for his patient explanations of  \cite{MRT} and \cite{moulinos2024filtered}. We  thank Christian Forero-Pulido for many insightful exchanges around the topics of this paper, and for suggesting the approach to  Proposition \ref{nodalcusp}. Thanks also go to Emanuele Pavia, Mauro Porta and Rin Ray for helpful conversations.
\section{Comparison}
  
\subsection{Formal groups and Cartier duality}
We shall be interested in the definition proposed in \cite{MRT} of  $\mathbb{G}$--Hochschild homology, where $\mathbb{G}$ is an abelian formal group of dimension one. We will follow the account of  $\mathbb{G}$--Hochschild homology from \cite{moulinos2024filtered} as it works over a general base ring $R$; in fact, Moulinos also considers the setting where $R$ is not discrete   but rather it is an $\mathbb{E}_\infty$ ring spectrum, however we shall not need that level of generality here. 

We start by briefly recalling the basics of Cartier duality, which is the main technical input in the construction of $\mathbb{G}$--twisted Hochschild homology. The source of this material are sections 2 and 3 of \cite{moulinos2024filtered} to which we refer the reader for additional information.

We fix throughout a discrete commutative ring $R$. We denote by $\mathrm{CAlg}_R^\heartsuit$ the category of commutative discrete $R$-algebras, and by  $\mathrm{CAlg}_R^{\heartsuit, \mathrm{ad}}$ the category of  commutative discrete $R$-algebras equipped with the $I$-adic topology, for some ideal of definition $I \subset R$. There is a natural inclusion
$$
\mathrm{CAlg}_R^\heartsuit \to \mathrm{CAlg}_R^{\heartsuit, \mathrm{ad}}
$$
 If $A$ is an adic algebra, its formal spectrum is denoted $\mathrm{Spf}(A)$, and  is defined as the left Kan extension of the   functor $\mathrm{Spec}$ out of $\mathrm{CAlg}_R^\heartsuit$
 $$
 \xymatrix{
 \mathrm{Set}^{\mathrm{CAlg}_R^\heartsuit}   & \\
  \mathrm{CAlg}_R^\heartsuit \ar[u]^-{\mathrm{Spec}} \ar[r] & \mathrm{CAlg}_R^{\heartsuit, \mathrm{ad}} \ar@{-->}[ul]_-{\mathrm{Spf}} 
 }
 $$
We say that $\mathrm{Spf}(A)$ is an affine formal scheme. More generally, a formal scheme $X$ over $R$ is a functor
$$
X: \mathrm{CAlg}_R^\heartsuit \to \mathrm{Set}
$$
 which is Zariski locally of the form $\mathrm{Spf}(A)$. A \emph{formal group} is a group object in formal schemes.  In this paper we will be  concerned with abelian formal groups. These can be explicitly described as the datum of a formal scheme $\mathbb{G}$ which takes values in abelian groups, and that commutes with direct sums. We denote by $f \mathrm{Sch}$ the category of formal schemes, and by $\mathrm{Ab}(f \mathrm{Sch})$ the category of abelian formal groups.

Standard Cartier duality is a duality between commutative and cocommutative Hopf algebras over a field $k$, and affine abelian formal groups over $k$. The story over a general ground ring $R$ is more subtle. Again, we  follow Moulinos  account from \cite{moulinos2024filtered}. Recall that a 
commutative and cocommutative  Hopf algebra  over $R$ can be defined as an abelian group object in the category of cocommutative coalgebras over $R$: in symbols, it is an object in 
$$
\mathrm{Ab(cCAlg)}
$$
  The following definition is due to Lurie \cite{ELL2}.
\begin{definition}
Let $A$ be a commutative and cocommutative Hopf algebra over $R$. We say that $A$ is smooth if the following conditions are satisfied:
\begin{itemize}
\item the underlying $R$-module of $A$ is flat over $R$
\item as a coalgebra, it is isomorphic to the divided power coalgebra of a finitely-generated projective $R$-module $M$
\end{itemize}
We denote the category of smooth commutative and cocommutative Hopf algebras by 
$ \, 
\mathrm{Ab(cCAlg)}^{\mathrm{sm}}. 
$
\end{definition}
Let $\mathrm{Ab}(\mathrm{Aff}_R)$ be the category of  affine abelian groups over $R$. There is a fully-faithful functor 
$$
\Phi: \mathrm{Ab(cCAlg)}^{\mathrm{sm}} \to (\mathrm{Ab}(\mathrm{Aff}_R))^{\mathrm{op}}
$$
which sends a commutative Hopf algebra $A$ to the affine scheme  $\mathrm{Spec}(A)$ equipped with its natural group structure. 
\begin{proposition}[Construction 3.17, \cite{moulinos2024filtered}]
There is a fully-faithful functor 
$$
\Psi:  \mathrm{Ab(cCAlg)}^{\mathrm{sm}} \to  \mathrm{Ab}(f \mathrm{Sch})
$$
that sends a  commutative and cocommutative Hopf algebra $A$ to the functor
$$
\mathrm{Spec}^{\mathrm{c}}(A): \mathrm{CAlg}_R \to \mathrm{Ab}, \quad \mathrm{Spec}^{\mathrm{c}}(A)(S) = \{x \in A \otimes_R S | \Delta(x) = x \otimes x  \}
$$
\end{proposition}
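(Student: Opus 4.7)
My plan is to verify in sequence that $\mathrm{Spec}^{\mathrm{c}}(A)$ lands in abelian groups, that it is representable by a formal scheme, and that the assignment $\Psi$ is fully faithful. The first point is largely formal: given grouplike elements $x,y \in A \otimes_R S$, their product satisfies $\Delta(xy) = \Delta(x)\Delta(y) = (x \otimes x)(y \otimes y) = xy \otimes xy$, so $xy$ is again grouplike; the unit $1 \in A$ is grouplike; and the antipode of $A$ supplies inverses (taking $\epsilon(x) = 1$ as implicit in the grouplike convention, which is automatic once we restrict to the group of units). Commutativity of the induced group law follows from commutativity of the multiplication on $A$, naturality in $S$ is automatic, and the whole construction is clearly functorial in $A$.

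For representability I would exploit the smoothness hypothesis in the form that $A \cong \Gamma(M)$ as a coalgebra, for some finitely generated projective $R$-module $M$. Working Zariski-locally on $\mathrm{Spec}(R)$ I may reduce to $M \cong R^{n}$, and the task becomes the identification of $\mathrm{Spec}^{\mathrm{c}}(\Gamma(R^{n}))$ with the formal affine space $\mathrm{Spf}(R[[x_1, \ldots, x_n]])$. The key input is that the continuous linear dual of the divided power coalgebra $\Gamma(R^{n})$ is precisely the completed symmetric algebra $\widehat{\mathrm{Sym}}((R^{n})^{\vee}) \cong R[[x_1, \ldots, x_n]]$; under this duality grouplike elements of $\Gamma(R^{n}) \otimes_R S$ correspond bijectively to continuous $R$-algebra maps $R[[x_1, \ldots, x_n]] \to S$, i.e.\ to $S$-points of $\mathrm{Spf}(R[[x_1, \ldots, x_n]])$. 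Gluing these local identifications exhibits $\mathrm{Spec}^{\mathrm{c}}(A)$ as a formal scheme globally, with the abelian group structure from the first step.

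For fully faithfulness the strategy is to recover morphisms of Hopf algebras from morphisms of formal groups via the topological duality set up in the previous paragraph. Faithfulness is essentially immediate: in the $\Gamma(M)$ normal form a Hopf morphism is pinned down by its effect on grouplikes after enough base changes. Fullness amounts to transposing a morphism $\mathrm{Spec}^{\mathrm{c}}(A) \to \mathrm{Spec}^{\mathrm{c}}(B)$ of formal groups into a continuous $R$-algebra map between the corresponding formal power series rings, and then dualising back to a coalgebra map $A \to B$; compatibility with the respective abelian group structures then forces this coalgebra map to respect the full Hopf algebra structure. The main obstacle is the topological duality underpinning these arguments over an arbitrary base ring $R$: over a field this is classical Cartier duality, but over a general $R$ one has to use projectivity of $M$ to ensure that the pairing $\Gamma(M) \otimes_R \widehat{\mathrm{Sym}}(M^{\vee}) \to R$ is perfect in the appropriate topological sense, and that the formation of grouplikes commutes with base change along $R \to S$. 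This is precisely the content of the smoothness hypothesis on $A$, and is exactly what makes the functor $\Psi$ land in honest formal schemes rather than merely formal group functors.
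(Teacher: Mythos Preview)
The paper does not give its own proof of this proposition: it is stated with the attribution ``Construction 3.17, \cite{moulinos2024filtered}'' and is used as a black box from Moulinos's work, with the surrounding text merely recording its consequences for Cartier duality over a general base. So there is no in-paper argument to compare your proposal against.

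That said, your sketch is a reasonable reconstruction of how such a statement is established in the literature. The three-step decomposition (abelian group structure on grouplikes, representability via the divided-power normal form, full faithfulness via topological duality with the completed symmetric algebra) is exactly the shape of the classical argument, and you correctly identify that the smoothness hypothesis is what makes the duality $\Gamma(M)^{*} \cong \widehat{\mathrm{Sym}}(M^{\vee})$ work over a general ring rather than a field. One point you might tighten: in your grouplike discussion you should make explicit that the convention $\Delta(x)=x\otimes x$ together with $\epsilon(x)=1$ is needed for the antipode to yield genuine inverses, and that over a general $S$ the set of grouplikes in $A\otimes_R S$ need not be closed under the antipode unless one imposes this counit condition. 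Your parenthetical acknowledges this but does not quite resolve it. The representability step is the heart of the matter and your outline via the identification of grouplikes in $\Gamma(R^n)\otimes_R S$ with nilpotent $n$-tuples (equivalently, $S$-points of $\mathrm{Spf}(R[[x_1,\ldots,x_n]])$) is correct in spirit, though carrying it out carefully requires tracking the ind-structure on both sides.
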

We obtain two fully-faithful functors 
$$
(\mathrm{Ab}(\mathrm{Aff}_R))^{\mathrm{op}} 
\stackrel{\Phi} \leftarrow \mathrm{Ab(cCAlg)}^{\mathrm{sm}} \stackrel{\Psi} \rightarrow 
\mathrm{Ab}(f \mathrm{Sch})
$$
Let us comment briefly on the main differences between the  general $R$-linear set-up, and the classical setting of Cartier duality over a field $k$. In the latter case, there is no need to restrict to smooth commutative and cocommutative Hopf algebras. The functors $\Phi$ and $\Psi$ are defined over the whole category of commutative and cocommutative Hopf algebras, and they are both equivalences. In that setting, Cartier duality is precisely the anti-equivalence 
$$\Phi \circ \Psi^{-1}: \mathrm{Ab}(f \mathrm{Sch}) \to (\mathrm{AffAb}_k)^{\mathrm{op}}$$

Over a general base $R$, the functor $\Psi$ is only fully-faithful, so we cannot take its inverse.  
For our purposes, however, it is sufficient that the composite $\Phi \circ \Psi^{-1}$ is defined over a particularly simple class of objects 
in $\mathrm{Ab}(f \mathrm{Sch})$. 
Namely, we are interested in one dimensional abelian formal groups, i.e. objects in $\mathrm{Ab}(f \mathrm{Sch})$ whose underlying formal scheme is isomorphic to $\mathrm{Spf}(R[[t]])$. Let us denote the sub-category they form by $\mathrm{Ab}(f \mathrm{Sch})^{\mathrm{1}-dim}$. It is easy to see that $\mathrm{Ab}(f \mathrm{Sch})^{\mathrm{1}-dim}$ lies in the essential image of $\Psi$. Let us set  
$$
(-)^\vee:=\Phi \circ \Psi^{-1}: \mathrm{Ab}(f \mathrm{Sch})^{\mathrm{1}-dim} \to (\mathrm{AffAb}_k)^{\mathrm{op}}
$$
Unpacking the construction, one sees  that  there is an isomorphism
$$
\mathcal{O}(\bG^\vee) \cong \mathcal{O}(\bG)^*
$$
where $(-)^*$ denotes the $R$-linear dual. 

We are now ready to define $\mathbb{G}$-Hochschild homology. This requires briefly fixing our conventions concerning derived geometry over $R$, even if the specifics only play a minor  role in the argument. We refer the reader to Section 1.2 of \cite{MRT} for additional details; we remark that occasionally,  for reasons of internal consistency,  our notations will diverge slightly from the ones in \cite{MRT}.

 Let $\mathrm{SCR}_R$ be the $\infty$-category of simplicial commutative $R$-algebras, and let $\mathrm{CAlg}_R^{\mathrm{cn}}$ be the category of connective $\mathbb{E}_\infty$-$R$-algebras. There is a monadic and comonadic monoidal functor relating the two (the Dold-Kan functor), which however fails to be an equivalence when $R$ is not a field. Thus there are effectively two distinct options to set-up the theory of  derived affine schemes over $R$.   The authors in \cite{MRT}  take derived affine schemes to be the opposite category of  set $\mathrm{SCR}_R$; we find it more convenient, however, to work instead with $\mathrm{CAlg}_R^{\mathrm{cn}}$. As explained in  \cite{moulinos2024filtered} these two approaches to 
$R$-linear derived  geometry, though different, are largely parallel; and in many situations one can freely switch between the two.

 We set 
$ \, \mathrm{dAff}_R:=(\mathrm{CAlg}_R^{\mathrm{cn}})^{\mathrm{op}}$. 
We denote by $\mathrm{St}_R$ the $\infty$-category of classical stacks, i.e. functors over a site of discrete $R$-algebras. We denote by $\mathrm{dSt}_R$ the $\infty$-category of derived stacks over $R$. Left Kan extension yields a functor
$$
L:\mathrm{St}_R \to \mathrm{dSt}_R
$$
If $\mathcal{X}$ and $\mathcal{Y}$ are derived stacks, we denote by $\mathrm{Map}_{\mathrm{dSt}_R}(\mathcal{X}, \mathcal{Y})$ their mapping stack.

Let us  also briefly introduce coaffine stacks, as they play a role in this story. Coaffine stacks were first studied  by To\"en in \cite{toen2006champs}, see also Lurie's treatment in \cite{DAGVIII}. They are associated with coconnective $\mathbb{E}_\infty$ $R$-algebras, as opposed to connective ones. 
More precisely
we can define a functor 
$$
\mathrm{coSpec}: (\mathrm{CAlg}_R^{\mathrm{cocn}})^{\mathrm{op}} \to \mathrm{St}_R. 
$$
The functor  admits a left adjoint, which is   denoted
$$
\mathrm{C}^* (-, \mathcal{O}): \mathrm{St}_R \to (\mathrm{CAlg}_R^{\mathrm{cocn}})^{\mathrm{op}}
$$ 
 By a small abuse of notation we denote by $\mathrm{coSpec}$ also the functor landing in $\mathrm{dSt}_R$, which is obtained by post-composing $\mathrm{coSpec}$ with $L$. We say that a derived stack is coaffine if it lies in the essential image of $\mathrm{coSpec}$. If $X$ is an object of $\mathrm{St}_R$
 we say that its affinization is the coaffine stack 
 $$\mathrm{Aff}(X):=\mathrm{coSpec} (\mathrm{C}^*(X, \mathcal{O}))$$  
\begin{definition}[Section 6.3 \cite{MRT}, Definition 7.2 \cite{moulinos2024filtered}]
Let $\mathbb{G}$ be an object in $\mathrm{Ab}(f \mathrm{Sch})^{\mathrm{1}-dim}$. We define $\mathbb{G}$-Hochschild homology to be the functor 
$$
\mathrm{HH}^{\mathbb{G}}_* : \mathrm{CAlg}_R^{\mathrm{cn}} \to \mathrm{CAlg}_R^{\mathrm{cn}} \, ,   \quad A \mapsto \mathrm{HH}^{\mathbb{G}}_*(A):=\mathcal{O}(\mathrm{Map}_{\mathrm{dSt}_R}(B \mathbb{G}^\vee, \mathrm{Spec}(A)))
$$ 
\end{definition}
The stack $B \mathbb{G}^\vee$ is always coaffine. This is the main reason why  the applicability of the theory of $\mathbb{G}$-Hochschild homology is ultimately  limited, as it is meaningful only in the context derived affine schemes; it is clear that for more general stacks the one above cannot be the correct definition. To explain this point, it will be helpful to record two interesting special cases of the definition above. 
\begin{itemize}
\item Let $\mathbb{G} :=\widehat{\mathbb{G}_a}$ be the formal group obtained by completing the additive group $\mathbb{G}_{a}$ at $0$.   Then $$
B \mathbb{G}^\vee \simeq \mathrm{coSpec}(R \oplus R[\eta])
$$
where $\eta$ is in (cohomological) degree one, and $R \oplus R[\eta]$ is a  square-zero extension of $R$. In particular
$$
\mathrm{HH}^{\mathbb{G}}_*(A) \simeq \mathcal{O}(\mathbb{T}_{\mathrm{Spec}(A)}[-1])
$$
where $\mathbb{T}_{\mathrm{Spec}(A)}[-1]$ denotes the total space of the shifted tangent bundle of $\mathrm{Spec}(A)$.
\item Let $\mathbb{G}:=\widehat{\mathbb{G}_{m}}$ be the formal group obtained by completing the multiplicative group at $1$. Then
$$
B \mathbb{G}^\vee \simeq \mathrm{Aff}(S^1_\mathrm{B})
$$ 
where $S^1_\mathrm{B}$ denotes the Betti stack of $S^1$. Since $\mathrm{Spec}(A)$ is affine, we have that 
\begin{equation}
\label{affinization}
\mathcal{L}\mathrm{Spec}(A):=\mathrm{Map}_{\mathrm{dSt}_R}(S^1_\mathrm{B}, \mathrm{Spec}(A)) \simeq \mathrm{Map}_{\mathrm{dSt}_R}(\mathrm{Aff}(S^1_\mathrm{B}), \mathrm{Spec}(A))
\end{equation}
Thus in this case $\mathbb{G}$-Hochschild homology  coincides with ordinary Hochschild homology, as the latter can be defined as functions on the derived loop space, that is
$$
\mathrm{HH}^{\mathbb{G}}_*(A) \simeq \mathcal{O}(\mathcal{L} \mathrm{Spec}(A)) \simeq \mathrm{HH}_*(A)
$$ 
\end{itemize}

\subsection{Fourier--Mukai duality for formal groups}
 
In this section we establish a Fourier--Mukai duality for formal groups. Let $\mathrm{Mod}_R^\heartsuit$ be the heart of the standard t-structure on $\mathrm{Mod}_R$.

%
%
%
%
%
%
%

\begin{lemma} \label{lem:dual}
Let $A$ be an algebra object in $\mathrm{Mod}_R$ such that $A$ lies in $\mathrm{Mod}_R^\heartsuit$ and  is a projective  $R$-module. Let $A^*$ be the $R$-linear dual of $A$ with its natural coalgebra structure. Then there is an equivalence of categories
$$\coMod_{A^*} \simeq   \Mod_{A}$$
\end{lemma}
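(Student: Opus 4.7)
The plan is to reduce the statement to the abstract duality between adjoint monads and comonads. The key observation is that, under the hypotheses, $A$ is a dualizable object of the symmetric monoidal $\infty$-category $\Mod_R$ with dual $A^*$: since it is concentrated in degree zero and projective over $R$ (taking ``projective'' in the sense of finitely generated projective, which is in any case required for $A^*$ to carry a genuine coalgebra structure), there are evaluation and coevaluation maps exhibiting the duality $A\dashv A^*$. Consequently the endofunctors $T := (-\otimes_R A)$ and $S := (-\otimes_R A^*)$ on $\Mod_R$ form an adjoint pair $T\dashv S$.

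Next, I would observe that the algebra structure on $A$ promotes $T$ to a monad on $\Mod_R$ whose $\infty$-category of algebras is by definition $\Mod_A$; dually, the coalgebra structure on $A^*$ (obtained by $R$-linear dualization of the multiplication of $A$) promotes $S$ to a comonad whose $\infty$-category of coalgebras is $\coMod_{A^*}$. A short verification using the triangle identities of the duality shows that these monad and comonad structures match under the adjunction $T\dashv S$: dualizing the unit and multiplication of $A$ along the evaluation and coevaluation maps precisely recovers the counit and comultiplication of $A^*$. The desired equivalence then follows from the general principle that for an adjoint pair of endofunctors on a presentable $\infty$-category, with compatible monad/comonad structures, one has a canonical equivalence between the $\infty$-category of $T$-algebras and the $\infty$-category of $S$-coalgebras.

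The main obstacle is really only the diagrammatic bookkeeping in the compatibility check; everything else is formal. A more hands-on alternative would construct the equivalence directly by sending an $A$-module with action $\mu\colon M\otimes_R A\to M$ to $M$ equipped with the coaction $M\to M\otimes_R A^*$ obtained as the mate of $\mu$ under the tensor--cotensor adjunction, and dually constructing the inverse functor using the evaluation map; mutual inverseness is then a triangle-identity computation that amounts to unfolding the same abstract statement.
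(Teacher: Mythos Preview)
Your proposal is correct and follows essentially the same approach as the paper: both arguments observe that projectivity makes $A$ dualizable, so that $A\otimes_R-$ and $A^*\otimes_R-$ form an adjoint pair of endofunctors carrying compatible monad and comonad structures, and then invoke the general Eilenberg--Moore principle that algebras over the monad agree with coalgebras over the adjoint comonad. Your parenthetical remark that ``projective'' should be read as ``finitely generated projective'' is apt and consistent with how the lemma is actually applied later (to $\cO(G_i)\cong R[T]/T^i$).
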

 
\begin{proof}
Since $A$ is a projective  $R$-module, it is  dualizable and therefore the internal-Hom functor 
$$\Hom_R(A, -):   \Mod_R \to   \Mod_R $$ is equivalent to the functor $A^* \otimes_R-$. As $A^*$ and $A$ are dual,  the functors $A^* \otimes_R-$ and  $A \otimes_R-$  form an ambidexterous adjunction. Now $A \otimes_R-$ coincides with the  endofunctor  of $\Mod_R$ which underlies the monadic forgetful functor  
$\Mod_A  \to \Mod_R$.  This is left-adjoint to the endofunctor of $\Mod_R$ underlying the comonadic forgetful functor 
$
\coMod_{A^*}  \to \Mod_R, 
$ as the latter is given by $A^* \otimes_R-$. So we have in fact an adjoint pair of a monad and a comonad: under this assumption it follows from the 
  Eilenberg--Moore theorem that there exists an equivalence 
$$
\Mod_A \simeq \coMod_{A^*}.
$$
\end{proof}

\begin{proposition} 
\label{prop:formalFM}
Let $\bG$ be an abelian formal group of dimension one over $R$. Then there is an equivalence of symmetric monoidal $\infty$-categories
$$\mathrm{F}_{\mathbb{G}}: \QCoh(\bG)^{\star}\simeq\QCoh(S^1_\bG)^\otimes$$
where $\star$ denotes the convolution tensor product induced by the formal group structure on $\bG$, and $\otimes$ denotes the ordinary tensor product on quasi-coherent sheaves.
\end{proposition}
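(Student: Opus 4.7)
My plan is to identify both sides of the equivalence with the category $\coMod_A$ of comodules over the commutative Hopf algebra $A := \mathcal{O}(\bG^\vee)$, endowed with its natural symmetric monoidal structure coming from the algebra multiplication $\mu_A$ (available because $A$ is a commutative bialgebra). Under such a double identification, the claimed Fourier--Mukai equivalence becomes tautological.

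For the right hand side, the standard identification of quasi-coherent sheaves on a classifying stack yields $\QCoh(S^1_\bG) = \QCoh(B\bG^\vee) \simeq \coMod_A$, where the coaction uses the Hopf algebra comultiplication $\Delta_A$ (dual to the multiplication on $\bG^\vee$); the tensor product of $\bG^\vee$-representations translates into the bialgebra comodule tensor product given by $\mu_A$, and the symmetric braiding comes from commutativity of $\mu_A$. For the left hand side, I identify $\QCoh(\bG) \simeq \coMod_A$ using Lemma \ref{lem:dual}: writing $\bG = \Spf(A^*)$ with $A^* = \mathcal{O}(\bG)$, I model $\QCoh(\bG)$ as the colimit $\colim_n \Mod_{A^*/I^n}$ over the finite-rank, $R$-projective truncations of $A^*$. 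Lemma \ref{lem:dual} gives $\Mod_{A^*/I^n} \simeq \coMod_{(A^*/I^n)^*}$ in each finite stage, and assembling in the colimit yields $\QCoh(\bG) \simeq \coMod_A$ with coalgebra structure $\Delta_A$.

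Finally, I verify that the convolution product on $\QCoh(\bG)$ matches the comodule tensor product on $\coMod_A$ coming from $\mu_A$. Concretely, $F \star G = m_*(F \boxtimes G)$ is implemented, in $A^*$-module terms, by restriction along the comultiplication $m^*: A^* \to A^* \widehat{\otimes} A^*$ induced by the group law on $\bG$; under Cartier duality $m^*$ is dual to $\mu_A$, so after transport through the previous step the convolution realizes exactly the bialgebra comodule tensor product induced by $\mu_A$. This agrees with the monoidal structure on $\QCoh(B\bG^\vee)$ obtained above, hence the identifications patch into the desired symmetric monoidal equivalence $\mathrm{F}_\bG$. The main obstacle is the identification $\QCoh(\bG) \simeq \coMod_A$ at the level of underlying $\infty$-categories: it requires correctly modeling quasi-coherent sheaves on the formal scheme $\bG$, passing to the colimit in an $\infty$-categorical fashion, and checking compatibility with the symmetric monoidal structures. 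Once this is in place, the rest of the argument is essentially a matter of tracking how Cartier duality interchanges algebra and coalgebra structures on the Hopf algebra $A$, which is exactly the content of Lemma \ref{lem:dual}.
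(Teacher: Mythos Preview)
Your strategy is essentially the paper's: both sides are identified with $\coMod_{\cO(\bG)^*}$, the left via the ind-presentation of $\bG$ together with Lemma~\ref{lem:dual} at each finite stage, and the right via comonadic descent along $a:\Spec R\to B\bG^\vee$. Two points deserve care. First, $\QCoh$ turns the colimit presentation $\bG=\colim_n\Spec(A^*/I^n)$ into a \emph{limit} $\lim_n\Mod_{A^*/I^n}$ along pullback functors, not a colimit; the paper works with this limit and then identifies $\lim_n\coMod_{(A^*/I^n)^*}\simeq\coMod_{\cO(\bG)^*}$. Your ``colimit'' is only correct if you mean the Pr$^{\mathrm L}$ colimit along pushforwards, which you should make explicit. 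Second, the monoidal compatibility is where the paper does most of its actual work: rather than tracking Hopf-algebra operations by hand, it uses that $a^*$ and $p_+$ are symmetric monoidal, deduces that the induced functors to $\coMod_{\cO(\bG)^*}^{\otimes_R}$ are oplax monoidal, and then upgrades oplax to strong using conservativity of the forgetful functor. Your sketch ``$m^*$ is dual to $\mu_A$'' is morally right but would need to be promoted to a statement about symmetric monoidal $\infty$-functors, which is exactly what that argument supplies.
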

\begin{proof}
We will prove that the following chain of equivalences is satisfied

$$\QCoh(\bG) \stackrel{(1)} \simeq \coMod_{\cO(\bG)^\ast} \stackrel{(2)} \simeq \coMod_{a^\ast a_\ast} \stackrel{(3)} \simeq \QCoh(B\Spec(\cO(\bG ^\vee))),$$ where $a: \Spec R \to B \mathbb{G}^\vee$. 
We start by showing $(1)$. Let 
$$
\bG \simeq \colim_i G_i \simeq \colim_i \Spec \cO(G_i)
$$ be a presentation of $\bG$ as an ind-affine scheme. Using descent for quasi-coherent sheaves we obtain equivalences
$$\QCoh(\bG)\simeq \lim_i \QCoh(G_i)\simeq\lim_i \Mod_{\cO(G_i)}$$

Note that for all $i$, we have that $\mathcal{O}(G_i)$ is a finite rank projective $R$-module; in fact, we can assume that $G_i$ is isomorphic to $\mathrm{Spec}(R[T]/T^i)$. 
By Lemma \ref{lem:dual} we obtain an equivalence 
$$\Mod_{\cO(G_i)}\simeq\coMod_{\cO(G_i)^*}$$ where $\cO(G_i)^*$ is the $R$-linear dual of $\cO(G_i)$. Further 
$$\lim_i \coMod_{\cO(G_i)^*}\simeq\coMod_{\cO(\bG)^*}$$
Hence, as we claimed we can  identify $\QCoh(\bG)$  with $\coMod_{\cO(\bG)^*},$  which is the category of comodules over the Hopf algebra of distributions on $\bG$, $$\mathrm{Dist}(\bG):=\cO(\bG)^*$$  

Next we turn to equivalence $(2)$. We need to show that the coaction of the comonad $a^\ast a_\ast$ is given by the tensor product by $\cO(\bG)^*$. We use base-change along the diagram
$$
\xymatrix{
\bG^\vee\ar[d]\ar[r] & \Spec R \ar[d]^{a} \\
\Spec R \ar[r]^{a} & B\bG^\vee
}
$$
In order to perform base-change, we need to show that the map $a:\Spec R \to B\Spec\cO(\bG)^\vee$ is perfect in the sense of Ben-Zvi--Francis--Nadler \cite{ben2010integral}; the fact that this is sufficient to have base-change is proved in Proposition 3.10 \cite{ben2010integral}. For this, it is enough to show that for any affine derived scheme $U$, the pullback of $a$ to $U$ is a perfect stack. This is immediate, as the map
$$
a: \Spec R \to B\bG^\vee 
 $$
 is affine: indeed, it is the atlas of the classifying stack of the affine group scheme $\bG^\vee$.   
 
Finally, let us prove equivalence $(3)$. Since $\bG^\vee$  is an affine  group scheme the functor 
$$a^\ast:\QCoh(B\bG^\vee)=\QCoh(B\Spec (\cO(\bG)^*))\to\QCoh(\Spec R)$$ 
is comonadic, and yields the desired identification  
$$\QCoh(B\Spec(\cO(\bG)^*))\simeq\coMod_{a^*a_*}$$
We conclude since, by definition, we have that 
$$S^1_\bG:=B\bG^\vee=B\Spec((\cO(\bG))^*)$$

Finally, let us turn to the monoidal structures. The fact that $\mathrm{F}_{\mathbb{G}}$ is a symmetric monoidal equivalence is not difficult: the two tensor structures are identified with tensor product over $R$ in $\coMod_{\cO(\bG)^\vee}$. However, let us spell  out the argument in more detail. The functors appearing in equivalences $(1)$ and $(3)\circ (2)$ are naturally symmetric monoidal. In particular, the equivalence $(3)\circ (2)$ is naturally symmetric monoidal as it comes from the symmetric monoidal functor 
$$a^\ast:\QCoh(B\bG^\vee)^\otimes\to\QCoh(\Spec R)^{\otimes_R}$$
where $\otimes_R$ is the symmetric monoidal structure given by tensor product relative to $R$. This implies that the induced functor $\phi^\otimes$ 
$$
\xymatrix{
    \QCoh(B\bG^\vee)^\otimes \ar[d]_{\phi^\otimes}\ar[r]^{a^\ast} & \QCoh(\Spec R)^{\otimes_R} \\
    \coMod_{\cO(\bG)^\ast}^{\otimes_R} \ar[ur]_{obv}
    }
$$
is also symmetric monoidal. Indeed, the functor $a^\ast$ naturally factors via $\coMod_{a^\ast a_\ast}$, and by the arguments above the comonad $a^\ast a_\ast$ acts as tensor product by $\cO(\bG)^\ast$. 

The case of equivalence $(1)$ is similar: we apply the same argument to the symmetric monoidal functor 
$$p_+:\QCoh(\bG)^\star\to\Mod_R^{\otimes_R}$$
defined as the left adjoint to the pullback functor
$$p^\ast:\Mod_R\to\QCoh(\bG)$$
along the structure map
$$p:\bG\to\Spec R$$
Indeed, the Hopf algebra of distributions $\mathrm{Dist}(\bG):=\cO(\bG)^\ast$ on $\bG$ is equivalently described as 
$$\mathrm{Dist}(\bG)=p_+ p^\ast (1_{\Mod_R})=p_+ p^\ast (R)=p_+(\cO_\bG)=\cO(\bG)^\ast$$

Before concluding the proof, let us explain why the functor $\phi^\otimes$ is symmetric monoidal. By \cite{Torii}, the forgetful functor $obv$ is (strong) symmetric monoidal.
Moreover, as $a^\ast$ is symmetric monoidal, the pushforward functor $a_\ast$ is lax monoidal. In particular, this implies that the right adjoint to $\phi$ is lax monoidal, as this functor is just the composition $a_\ast\circ obv$. Thus, $\phi$ is oplax monoidal, as it is left adjoint to a lax monoidal functor. We only need to check the property that this lax monoidal functor is symmetric monoidal: this is done by recalling that $obv \circ \phi=a^\ast$ is symmetric monoidal, and by using that $obv$ is conservative and symmetric monoidal itself.

%
\end{proof}
If the formal group $\bG$ is the formal additive group $\widehat{\bG}_a$, a similar statement is proved in characteristic 0 in \cite[Proposition 5.17]{SafNaef}.
\begin{remark}
Proposition \ref{prop:formalFM} is closely related to the duality theory for commutative group stacks expounded by Arinkin (who credits it to Beilinson)  in \cite{donagi2008torus}[Appendix A]. If $\mathcal{X}$ is a commutative group stack, then its dual $\mathcal{X}^{\vee}$ is the stack of morphisms of commutative group stacks between $\mathcal{X}$ and $B\mathbb{G}_m$,
$$
\mathcal{X}^{\vee} := \mathrm{Map}_{\mathrm{Ab}(\mathrm{dSt}_R)}(\mathcal{X}, B\mathbb{G}_m)
$$
Under these assumptions, there should be a Fourier--Mukai equivalence 
$$
\mathrm{QCoh}(\mathcal{X}) \simeq \mathrm{QCoh}(\mathcal{X}^{\vee})
$$
that generalizes the classical Fourier--Mukai equivalence for dual abelian varieties over a field. 
Arinkin's appendix contains no proofs, and his set-up is somewhat more limited than the one we would need here. Arinkin does mention that the theory also applies to ind-affine groups but only under the assumption, which is too restrictive for us, that $R=\mathbb{C}$.

To bridge the gap between Arinkin's picture and our Proposition \ref{prop:formalFM}, the first step would be proving that in our setting there is an equivalence 
$$S^1_\bG\simeq\mathrm{Map}_{\mathrm{Ab}(\mathrm{dSt}_R)}(\bG,B\bG_m)$$
This would show that $S^1_\bG$ is indeed the dual commutative group stack of $\bG$. For the moment we leave this as a conjecture; we limit ourselves to point out that there is a natural comparison map   between the two stacks
$$
S^1_\bG := B \bG^\vee \simeq B \mathrm{Map}_{\mathrm{Ab}(\mathrm{dSt}_R)}(\bG,\bG_m) \simeq B \Omega_* \mathrm{Map}_{\mathrm{Ab}(\mathrm{dSt}_R)}(\bG,B\bG_m) 
\stackrel{c} \longrightarrow 
 \mathrm{Map}_{\mathrm{Ab}(\mathrm{dSt}_R)}(\bG,B\bG_m) 
$$
 which induces an equivalence on completions. 
\end{remark}

As a consequence of  Proposition \ref{prop:formalFM} we obtain the following  more explicit description of $S^1_\bG$. Let $\mathcal{O}_e$ be the skyscraper sheaf at the closed point 
$e \in \bG$. The $\mathbb{E}_1$-algebra $\mathrm{Hom}_{\QCoh(\bG)}(\mathcal{O}_e,\mathcal{O}_e)$ has in fact a natural upgrade to an $\mathbb{E}_\infty$-algebra. Indeed,  the object $\mathcal{O}_e$ is the unit for the symmetric tensor product on $\QCoh(\bG)^\star$. Further, $\mathrm{Hom}_{\QCoh(\bG)}(\mathcal{O}_e,\mathcal{O}_e)$ is coconnective; this is easily seen by computing Ext-modules via Koszul resolution.  Thus we have that $$
 \mathrm{Hom}_{\QCoh(\bG)}(\mathcal{O}_e,\mathcal{O}_e) \in \mathrm{CAlg}_R^{\mathrm{cocn}}
$$
\begin{corollary}\label{corollary:ellcircle}
There is an equivalence 
$$S^1_\bG \simeq \mathrm{coSpec}( \mathrm{Hom}_{\QCoh(\bG)}(\mathcal{O}_e,\mathcal{O}_e))$$
\end{corollary}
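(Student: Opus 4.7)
The plan is to deduce the corollary as a direct consequence of the symmetric monoidal equivalence $\mathrm{F}_{\mathbb{G}}$ from Proposition \ref{prop:formalFM}, combined with the coaffineness of $S^1_\bG$ which is already used in the paper.

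First I would identify $\mathcal{O}_e$ as the unit of the convolution monoidal structure on $\QCoh(\bG)^{\star}$. This is a general feature of group objects: with $m:\bG\times\bG\to\bG$ the multiplication and $e:\Spec R\to\bG$ the identity section, the convolution product is $\mathcal{F}\star\mathcal{G}=m_*(\mathcal{F}\boxtimes\mathcal{G})$, for which the unit is precisely $e_*\mathcal{O}_{\Spec R}=\mathcal{O}_e$. Since $\mathrm{F}_{\mathbb{G}}$ is a symmetric monoidal equivalence and the unit of $\QCoh(S^1_\bG)^{\otimes}$ is the structure sheaf $\mathcal{O}_{S^1_\bG}$, we obtain $\mathrm{F}_{\mathbb{G}}(\mathcal{O}_e)\simeq\mathcal{O}_{S^1_\bG}$.

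Next I would promote this to an equivalence of $\mathbb{E}_\infty$-algebras
$$\mathrm{Hom}_{\QCoh(\bG)}(\mathcal{O}_e,\mathcal{O}_e)\simeq\mathrm{Hom}_{\QCoh(S^1_\bG)}(\mathcal{O}_{S^1_\bG},\mathcal{O}_{S^1_\bG})\simeq C^*(S^1_\bG,\mathcal{O}).$$
Here the first equivalence uses that $\mathrm{F}_{\mathbb{G}}$ is an equivalence of $\infty$-categories, while the $\mathbb{E}_\infty$ enhancement on both sides arises canonically because each endomorphism object is that of the unit in a symmetric monoidal $\infty$-category; the equivalence is a morphism of such endomorphism algebras precisely because $\mathrm{F}_{\mathbb{G}}$ is symmetric monoidal. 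The second equivalence is the standard identification of the derived endomorphisms of the structure sheaf on a stack with its derived global sections.

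Finally I would conclude by invoking coaffineness of $S^1_\bG=B\bG^{\vee}$, a fact the paper already uses right after the definition of $\mathbb{G}$-Hochschild homology. Together with the adjunction $\mathrm{coSpec}\dashv C^*(-,\mathcal{O})$ recalled in the preceding subsection, coaffineness means exactly that the counit $\mathrm{coSpec}(C^*(S^1_\bG,\mathcal{O}))\to S^1_\bG$ is an equivalence. Combining with the previous step gives
$$S^1_\bG\simeq\mathrm{coSpec}(C^*(S^1_\bG,\mathcal{O}))\simeq\mathrm{coSpec}(\mathrm{Hom}_{\QCoh(\bG)}(\mathcal{O}_e,\mathcal{O}_e)).$$
The main subtlety is keeping track of the $\mathbb{E}_\infty$-structures and checking that the equivalence produced by $\mathrm{F}_{\mathbb{G}}$ intertwines them; once one grants that symmetric monoidal equivalences identify endomorphism algebras of units as $\mathbb{E}_\infty$-algebras, the rest of the argument is formal.
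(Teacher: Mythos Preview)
Your proposal is correct and follows essentially the same route as the paper's proof: use that the symmetric monoidal equivalence $\mathrm{F}_{\mathbb{G}}$ matches the units $\mathcal{O}_e$ and $\mathcal{O}_{S^1_\bG}$, deduce an equivalence of $\mathbb{E}_\infty$-algebras $\mathrm{Hom}_{\QCoh(\bG)}(\mathcal{O}_e,\mathcal{O}_e)\simeq\mathcal{O}(S^1_\bG)$, and then invoke coaffineness of $S^1_\bG$. One small slip: in the paper's conventions $\mathrm{C}^*(-,\mathcal{O})$ is the \emph{left} adjoint of $\mathrm{coSpec}$, so the relevant comparison map is the unit $S^1_\bG\to\mathrm{coSpec}(\mathrm{C}^*(S^1_\bG,\mathcal{O}))=\mathrm{Aff}(S^1_\bG)$, not a counit going the other way.
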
 
\begin{proof}
The stacks $S^1_{\bG}$ is coaffine, hence we only need to identify the global sections of the respective structure sheaves.  Proposition \ref{prop:formalFM} gives us such identification. Since $\mathrm{F}_{\mathbb{G}}$ is symmetric monoidal it preserves unit objects, that is
$$
\mathrm{F}_{\mathbb{G}}(\cO_{S^1_{\bG}}) \simeq \mathcal{O}_e
$$
Thus $\mathrm{F}_{\mathbb{G}}$ induces an equivalence of objects in $\mathrm{CAlg}_R^{\mathrm{cocn}}$
$$\cO(S^1_{\bG})\simeq\mathrm{Map}_{\QCoh(S^1_{\bG}) }(\cO_{S^1_{\bG}},\cO_{S^1_{\bG}})\simeq \mathrm{Map}_{\QCoh(\bG)}(\mathcal{O}_e,\mathcal{O}_e).$$
\end{proof}
The last ingredient we shall need is the theory of Fourier--Mukai duality for dual elliptic curves over a general discrete base ring $R$. Most references on this topic assume that $R$ is a field. However Lurie \cite{ELL1} develops the theory in great generality over an $\mathbb{E}_\infty$-ring spectrum. Recall that for us, here as elsewhere in the paper, $R$ is in fact a discrete commutative ring and this entails some simplifications in Lurie's treatment. 

We can summarize the relevant points from Lurie's work as follows. Let $E$ be an elliptic curve over $R$. It is possible to define the dual of $E$, either as the connected component of the identity in the Picard scheme of $E$, $\mathcal{P}\mathrm{ic}^0_E$ (for the precse definition see Construction 5.4.6 \cite{ELL1}); or as the moduli space of multiplicative line bundles on $E$, $\mathcal{P}\mathrm{ic}^m_E$ (see Definition 5.3.1 \cite{ELL1}). When $R$ is a discrete ring  these two objects agree, and are isomorphic to $E$. In Lurie's picture we get a perfect bi-extensions
$$
E \wedge E \to \mathrm{BGL}_1
$$
which classically corresponds to the datum of the Poincar\`e bundle,  which is the kernel yielding the Fourier--Mukai equivalence. Let $\mathrm{QCoh}(E)$ be the category of quasi-coherent sheaves on $E$. \begin{theorem}[Proposition 5.1.3 \cite{ELL1}]
\label{FML}
There is a symmetric monoidal equivalence 
$$
\mathrm{FM}_E: \mathrm{QCoh}(E)^\star \to \mathrm{QCoh}(E)^\otimes
$$
where $\star$ and $\otimes$ denote, respectively, the convolution tensor product and the ordinary tensor product. 
\end{theorem}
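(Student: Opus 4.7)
The plan is to follow the classical Fourier--Mukai strategy and use the Poincar\'e line bundle $\mathcal{P}$ as the kernel of an integral transform. The bi-extension $E \wedge E \to \mathrm{BGL}_1$ recalled above is precisely a biextension structure on a line bundle $\mathcal{P}$ on $E \times_R E$, meaning that $\mathcal{P}$ is multiplicative, as a $B\Gm$-torsor, in each of its two variables. Writing $p_1, p_2: E \times_R E \to E$ for the two projections, I would define
$$
\mathrm{FM}_E(\mathcal{F}) := p_{2,*}\bigl( p_1^* \mathcal{F} \otimes \mathcal{P} \bigr).
$$
Since $E \to \Spec R$ is flat and proper, proper base change and the projection formula apply cleanly, and the assignment is well-defined on $\QCoh$.

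Monoidality will follow from the multiplicativity of $\mathcal{P}$ in the first variable. Writing $m: E \times E \to E$ for the group law and $p_{13}, p_{23}$ for the obvious projections from $E \times E \times E$, the biextension condition yields an equivalence $(m \times \id)^* \mathcal{P} \simeq p_{13}^* \mathcal{P} \otimes p_{23}^* \mathcal{P}$. Combining this with proper base change along $m$ and the projection formula produces a natural equivalence
$$
\mathrm{FM}_E(\mathcal{F} \star \mathcal{G}) \simeq \mathrm{FM}_E(\mathcal{F}) \otimes \mathrm{FM}_E(\mathcal{G}).
$$
Upgrading this to a symmetric monoidal equivalence requires tracking higher coherences, but these are cleanly packaged by viewing $\mathcal{P}$ as a morphism of commutative group stacks, so that the monoidal structure on $\mathrm{FM}_E$ is extracted functorially from the bi-extension datum.

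To show that $\mathrm{FM}_E$ is an equivalence I would invoke Mukai's strategy: identify the composition $\mathrm{FM}_E \circ \mathrm{FM}_E$ with the integral transform whose kernel on $E \times E$ is $q_{13,*}(q_{12}^* \mathcal{P} \otimes q_{23}^* \mathcal{P})$ (with $q_{ij}$ the projections from $E^3$), and show that this kernel is, up to a cohomological shift, the pushforward of $\mathcal{O}_E$ along the antidiagonal $(-1, \id): E \to E \times E$. This reduces to the cohomological facts that $R\Gamma(E, \mathcal{L}) = 0$ for $\mathcal{L}$ a non-trivial degree-zero line bundle, and $R\Gamma(E, \mathcal{O}_E) \simeq R \oplus R[-1]$; both hold uniformly in $R$ via cohomology and base change for the flat proper morphism $E \to \Spec R$. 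Hence $\mathrm{FM}_E \circ \mathrm{FM}_E \simeq (-1)^*[-1]$, which is manifestly an autoequivalence, so $\mathrm{FM}_E$ itself is one.

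The main obstacle is keeping the whole construction functorial and coherent over an arbitrary discrete base $R$: classical treatments of Fourier--Mukai duality lean heavily on the base being a field, and transporting them to the $R$-linear derived setting is not automatic. The two crucial inputs are the existence and functoriality of the Poincar\'e biextension in this generality, which is exactly the content of Lurie's theory recalled in the paper, and the uniform validity of Mukai vanishing and of the cohomology of $\mathcal{O}_E$, which follow from cohomology and base change together with flatness of $E/R$. A more abstract alternative, avoiding explicit kernel manipulations, would be to verify directly via Barr--Beck monadicity that the right adjoint of $\mathrm{FM}_E$ is a two-sided inverse, using the multiplicativity of $\mathcal{P}$ to identify the resulting (co)monad on $\QCoh(E)$ with the identity.
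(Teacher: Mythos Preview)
The paper does not give its own proof of this theorem: it is quoted as Proposition 5.1.3 of Lurie's \emph{Elliptic Cohomology I} and used as a black box. The only contextual remarks the paper makes are exactly the ones you build on, namely that over a discrete ring $R$ one has $\mathcal{P}\mathrm{ic}^0_E \simeq \mathcal{P}\mathrm{ic}^m_E \simeq E$ and that the perfect bi-extension $E \wedge E \to \mathrm{BGL}_1$ plays the role of the Poincar\'e bundle. So there is nothing to compare your argument against in the paper itself.

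That said, your sketch is the correct classical strategy and lines up with how Lurie organizes the argument: the bi-extension datum is precisely what makes $\mathrm{FM}_E$ symmetric monoidal, and Mukai's computation of $\mathrm{FM}_E \circ \mathrm{FM}_E$ via the kernel calculus is the standard route to invertibility. Your identification of the two delicate points---coherent symmetric monoidality over an arbitrary base, and the uniform validity of Mukai vanishing via cohomology and base change---is accurate. One small caution: the step ``$R\Gamma(E,\mathcal{L}) = 0$ for $\mathcal{L}$ a non-trivial degree-zero line bundle'' is clean over a field but over a general base $R$ one must phrase this as a statement about the pushforward of $\mathcal{P}$ being supported on the identity section, which is what cohomology and base change actually gives; the pointwise vanishing formulation can be misleading when $\Spec R$ is not reduced or disconnected.
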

Let $\widehat{E}$ in $\mathrm{Ab}(f \mathrm{Sch})^{\mathrm{1}-dim}$ be the completion of an elliptic curve $E$ over $R$ at the identity section. 
\begin{corollary}
\label{comparison}
There is an equivalence of coaffine stacks
$$
\mathrm{Aff}(E) \simeq S^1_{\widehat{E}}
$$
\end{corollary}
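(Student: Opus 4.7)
The strategy is to reduce the claim to a comparison of $\mathbb{E}_\infty$-algebras, use the Fourier--Mukai equivalences for $E$ and for $\widehat{E}$ to rewrite both sides as endomorphism algebras of the skyscraper sheaf at the identity, and then invoke locality of the resulting $\mathrm{Ext}$ computation. Both $\mathrm{Aff}(E)$ and $S^1_{\widehat{E}}$ are coaffine, and $\mathrm{coSpec}$ is fully faithful on $\mathrm{CAlg}_R^{\mathrm{cocn}}$, so it suffices to produce a natural equivalence of coconnective $\mathbb{E}_\infty$-$R$-algebras
$$
C^*(E, \mathcal{O}) \simeq \mathcal{O}(S^1_{\widehat{E}}).
$$

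First I would rewrite both sides as endomorphisms of the convolution unit. On the right-hand side, Corollary~\ref{corollary:ellcircle} applied with $\bG = \widehat{E}$ identifies $\mathcal{O}(S^1_{\widehat{E}})$ with $\mathrm{End}_{\QCoh(\widehat{E}), \star}(\mathcal{O}_e)$, where $\mathcal{O}_e$ is the skyscraper at the unique closed point of $\widehat{E}$, i.e.\ the unit of convolution. On the left-hand side, Theorem~\ref{FML} provides a symmetric monoidal equivalence $\mathrm{FM}_E: \QCoh(E)^\star \simeq \QCoh(E)^\otimes$; since symmetric monoidal functors preserve units, and since the unit of $\star$ on $E$ is the skyscraper $\mathcal{O}_e$ at the identity section while the unit of $\otimes$ is $\mathcal{O}_E$, this gives an $\mathbb{E}_\infty$-algebra equivalence
$$
C^*(E, \mathcal{O}) = \mathrm{End}_{\QCoh(E), \otimes}(\mathcal{O}_E) \simeq \mathrm{End}_{\QCoh(E), \star}(\mathcal{O}_e).
$$
At this point, the theorem reduces to constructing a natural $\mathbb{E}_\infty$-algebra equivalence
$$
\mathrm{End}_{\QCoh(E), \star}(\mathcal{O}_e) \simeq \mathrm{End}_{\QCoh(\widehat{E}), \star}(\mathcal{O}_e).
$$

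To produce this last equivalence I would exploit the fact that the formal completion map $f: \widehat{E} \to E$ is a homomorphism of commutative group objects, so that pullback $f^*: \QCoh(E)^\star \to \QCoh(\widehat{E})^\star$ is symmetric monoidal (convolution commutes with pullback along group homomorphisms). Since $f^*$ sends the skyscraper at $e$ on $E$ to the skyscraper at $e$ on $\widehat{E}$, it induces an $\mathbb{E}_\infty$-algebra map between the endomorphism algebras of the respective units. That this map is an equivalence on underlying $R$-modules is a local computation: the $\mathrm{Ext}$-groups of the skyscraper with itself are computed by a Koszul resolution that depends only on the completed local ring of $E$ at $e$, so the calculation on $E$ coincides with the one on $\widehat{E}$.

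The main obstacle I anticipate is precisely the last step: confirming that the local Koszul comparison, which is transparent at the level of underlying chain complexes, lifts to a genuine equivalence of $\mathbb{E}_\infty$-algebras with respect to the convolution symmetric monoidal structure, rather than merely the ordinary tensor product. Equivalently, one must check that the pullback $f^*$ is symmetric monoidal for convolution in a sufficiently structured way and not only on underlying $\infty$-categories. An alternative route, which may sidestep this delicate point, is to invoke Proposition~\ref{prop:formalFM} for $\widehat{E}$ to reinterpret $\mathrm{End}_{\QCoh(\widehat{E}), \star}(\mathcal{O}_e)$ as the global sections of $S^1_{\widehat{E}}$ and then exhibit a direct comparison map $\mathrm{Aff}(E) \to S^1_{\widehat{E}}$ induced by the canonical inclusion of formal groups, and verify it is an equivalence of coaffine stacks by checking on functions.
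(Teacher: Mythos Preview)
Your approach is essentially the same as the paper's: reduce to an equivalence of coconnective $\mathbb{E}_\infty$-algebras, apply Theorem~\ref{FML} to rewrite $C^*(E,\mathcal{O})$ as $\mathrm{Hom}_{\QCoh(E)^\star}(\mathcal{O}_e,\mathcal{O}_e)$, and invoke Corollary~\ref{corollary:ellcircle} for the other side. The paper then simply writes $\mathrm{coSpec}(\mathrm{Hom}_{\QCoh(E)^\star}(\mathcal{O}_e,\mathcal{O}_e)) \simeq S^1_{\widehat{E}}$ as a direct application of Proposition~\ref{prop:formalFM}, so the passage from $\QCoh(E)^\star$ to $\QCoh(\widehat{E})^\star$ that you single out as the main obstacle is left implicit there as well; your proposed route through the symmetric monoidal pullback along $\widehat{E}\to E$ and a local Koszul comparison is a reasonable way to make that step explicit.
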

\begin{proof}
This is an immediate consequence of Proposition \ref{prop:formalFM} and Theorem \ref{FML}. Using the notations of Theorem \ref{FML} we have that 
$$
\mathrm{FM}_E(\mathcal{O}_e) \simeq \mathcal{O}_E
$$
Since $\mathrm{FM}_E$ is an equivalence of $\mathbb{E}_\infty$-monoidal categories, it induces an equivalence of $\mathbb{E}_\infty$-algebras
$$
 \mathrm{Hom}_{\mathrm{QCoh}(E)^\star}(\mathcal{O}_e, \mathcal{O}_e) 
 \simeq  \mathrm{C}^* (E, \mathcal{O}) = \mathrm{Hom}_{\mathrm{QCoh}(E)^{\otimes}}(\mathcal{O}_E, \mathcal{O}_E).$$
Taking the cospectrum, and applying Proposition \ref{prop:formalFM}, we obtain the desired equivalence
$$
\mathrm{Aff}(E) =   \mathrm{coSpec} (\mathrm{Hom}_{\mathrm{QCoh}(E)^{\otimes}}(\mathcal{O}_E, \mathcal{O}_E)) \simeq 
\mathrm{coSpec} ( \mathrm{Hom}_{\mathrm{QCoh}(E)^\star}(\mathcal{O}_e, \mathcal{O}_e)) \simeq S^1_{\widehat{E}}
$$
\end{proof}
\begin{remark}
In fact, a stronger result than Corollary \ref{comparison} is true. Note that  
$\mathrm{Aff}(E)$ and $S^1_{\widehat{E}}$ are  naturally equipped with abelian group structures.   Corollary \ref{comparison} can be enhanced to an equivalence of abelian group objects in coaffine stacks.  
The categories appearing in Proposition \ref{prop:formalFM} and Theorem \ref{FML} are naturally commutative and cocommutative Hopf algebra objects in $\mathrm{Pr}^{\mathrm{L}}$, in the sense of \cite{ELL1}. The functors given in Proposition \ref{prop:formalFM} and Theorem \ref{FML}  are both equivalences of Hopf categories. In fact, in the case of Theorem \ref{FML}  this follows from  Proposition 5.1.3 \cite{ELL1}. Proposition \ref{prop:formalFM} can also be upgraded to an equivalence of Hopf categories. At the level of the endomorphisms of the unit objects, this encodes the extra data required to compare the abelian group structures on $\mathrm{Aff}(E)$ and $S^1_{\widehat{E}}$. 
However, since the  group structures do not enter the definition of $\mathbb{G}$-Hochschild homology, we do not pursue this story in any further detail here. We stress that the group structure should play a key role if one wishes to define   \emph{negative cyclic}   $\mathbb{G}$-Hochschild homology. This is an interesting problem, which we intend to return to in future work. 
\end{remark}

Corollary \ref{comparison} immediately implies that  two alternative  definitions of elliptic Hochschild homology which  given independently in \cite{MRT} and \cite{sibilla2023equivariant} in fact agree. To explain this point, let us recall briefly the setting of \cite{sibilla2023equivariant}. In \cite{sibilla2023equivariant} we define  more generally elliptic Hochschild homology of quotient stacks $S$ of the form $[X/G]$ where $X$ is a scheme and $G$ is a reductive algebraic group. Since our goal is comparing our construction to the one in \cite{MRT}, where only affine schemes are considered, we can focus on the more restrictive setting where $G$ is trivial. 

\begin{definition}
Let $E$ be an elliptic curve over $R$. Let $X$ be a scheme over $R$. 
\begin{itemize}
\item We denote by 
$ \, 
\mathrm{Map}_{\mathrm{dSt}_R}^0(E, X)
$ 
the smallest clopen substack  of $\mathrm{Map}_{\mathrm{dSt}_R}(E, X)$ containing the constant maps, i.e. the maps factoring as 
$ \, 
E \to \mathrm{Spec}(R) \to X
$. We call $ \mathrm{Map}_{\mathrm{dSt}_R}^0(E, X) $  the stack of almost constant maps from $E$ to $X$. 
\item The elliptic Hochschild homology of $X$ is defined as the global  sections of the structure sheaf of $ \mathrm{Map}_{\mathrm{dSt}_R}^0(E, X) $
$$
\mathrm{HH}_E(X) := \mathcal{O}(\mathrm{Map}_{\mathrm{dSt}_R}^0(E, X) ) 
$$
\end{itemize}
\end{definition}

\begin{corollary}
\label{compHH}
Let $E$ be an elliptic curve over $R$, and let $\widehat{E}$ be its completion at the identity section. Let $X = \mathrm{Spec}(A)$ be an affine scheme over $R$. Then there is a natural equivalence 
$$
\mathrm{HH}^{\widehat{E}}_*(X) \simeq \mathrm{HH}_E(X)
$$
\end{corollary}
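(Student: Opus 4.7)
The plan is to chain two equivalences. First, Corollary \ref{comparison} provides an equivalence of coaffine stacks $S^1_{\widehat{E}} \simeq \mathrm{Aff}(E)$. Precomposition along this equivalence yields
$$\mathrm{HH}^{\widehat{E}}_*(X) \;=\; \mathcal{O}\bigl(\mathrm{Map}_{\mathrm{dSt}_R}(S^1_{\widehat{E}}, \mathrm{Spec}(A))\bigr) \;\simeq\; \mathcal{O}\bigl(\mathrm{Map}_{\mathrm{dSt}_R}(\mathrm{Aff}(E), \mathrm{Spec}(A))\bigr).$$
It therefore suffices to produce a natural equivalence between the right-hand side and $\mathrm{HH}_E(X) = \mathcal{O}(\mathrm{Map}^0_{\mathrm{dSt}_R}(E, \mathrm{Spec}(A)))$.

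The second equivalence is engineered from the unit $\eta : E \to \mathrm{Aff}(E)$ of the affinization adjunction, which induces by precomposition a restriction map
$$\eta^* \,:\, \mathrm{Map}_{\mathrm{dSt}_R}(\mathrm{Aff}(E), \mathrm{Spec}(A)) \;\longrightarrow\; \mathrm{Map}_{\mathrm{dSt}_R}(E, \mathrm{Spec}(A)).$$
I would first argue that $\eta^*$ factors through the almost-constant clopen substack $\mathrm{Map}^0$: the coaffine stack $\mathrm{Aff}(E)$ is connected and pointed by the affinization of the identity section, so every $T$-point of the source, upon restriction along $\eta$, deforms a constant map and thus lies in the almost-constant component. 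The crux is to promote this factorization to an equivalence after applying $\mathcal{O}$. For affine targets, this is essentially the content of the computation of elliptic Hochschild homology in \cite{sibilla2023equivariant}: one shows that $\mathcal{O}(\mathrm{Map}^0(E, \mathrm{Spec}(A)))$ is a coconnective $\mathbb{E}_\infty$-algebra entirely determined by the derived cohomology $\mathrm{C}^*(E, \mathcal{O})$, and since $\mathrm{Aff}(E) = \mathrm{coSpec}(\mathrm{C}^*(E, \mathcal{O}))$ is by definition the coaffine stack encoding exactly this datum, the two sides produce equivalent algebras of functions.

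The main obstacle is this final identification of global sections. A concrete workable strategy is to test both mapping stacks against an arbitrary affine $T$ and to exhibit each as the formal neighborhood of the locus of constant maps inside $\mathrm{Map}(E_T, \mathrm{Spec}(A)_T)$: the affineness of $\mathrm{Spec}(A)$ forces an infinitesimal map $E_T \to \mathrm{Spec}(A)$ to be controlled by $\mathbb{E}_\infty$-algebra maps from $A$ into the derived global sections on $E_T$, which are precisely the $T$-points of $\mathrm{Map}(\mathrm{Aff}(E), \mathrm{Spec}(A))$ via the defining adjunction between $\mathrm{coSpec}$ and $\mathrm{C}^*(-, \mathcal{O})$. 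Naturality in $A$ and $E$ then packages the pointwise comparison into an equivalence of functors, which when chained with the first step yields the claimed natural equivalence $\mathrm{HH}^{\widehat{E}}_*(X) \simeq \mathrm{HH}_E(X)$.
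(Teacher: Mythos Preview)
Your overall architecture matches the paper's: first invoke Corollary \ref{comparison} to replace $S^1_{\widehat{E}}$ by $\mathrm{Aff}(E)$, then compare maps out of $\mathrm{Aff}(E)$ with maps out of $E$. However, your execution of the second step is more convoluted than necessary and contains a misstep.

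The paper dispatches the second step in one line: because $X=\mathrm{Spec}(A)$ is affine, the unit map $\eta:E\to\mathrm{Aff}(E)$ induces an equivalence of mapping \emph{stacks}
\[
\eta^*:\mathrm{Map}_{\mathrm{dSt}_R}(\mathrm{Aff}(E),X)\;\xrightarrow{\ \sim\ }\;\mathrm{Map}_{\mathrm{dSt}_R}(E,X),
\]
not merely an equivalence after applying $\mathcal{O}$. This is exactly the universal property of affinization (equivalently, the adjunction you mention between $\mathrm{coSpec}$ and $\mathrm{C}^*(-,\mathcal{O})$, applied over every test scheme $T$). Once you have this, the truncation of $\mathrm{Map}_{\mathrm{dSt}_R}(E,X)$ is just $t_0(X)$, so the clopen substack $\mathrm{Map}^0$ coincides with the full mapping stack, and you are done. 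There is no need to argue separately that $\eta^*$ lands in $\mathrm{Map}^0$, nor to pass to global sections and invoke computations from \cite{sibilla2023equivariant}.

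Your proposed ``workable strategy'' of exhibiting both sides as the \emph{formal neighborhood} of the constant maps is where things go wrong. The almost-constant substack $\mathrm{Map}^0$ is by definition a \emph{clopen} substack, not a formal neighborhood; and for affine $X$ it is in fact the entire mapping stack, not some infinitesimal thickening of the constant locus. Restricting attention to ``infinitesimal maps'' is a red herring here and, if taken literally, would not recover the correct object. The adjunction argument you sketch at the end is the right tool, but it already gives the full equivalence of mapping stacks without any infinitesimal restriction.
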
 
\begin{proof}
Since $X$ is affine,   the natural map 
$$
\mathrm{Map}_{\mathrm{dSt}_R}(\mathrm{Aff}(E), X) \to \mathrm{Map}_{\mathrm{dSt}_R}(E, X)
$$
is an equivalence. This implies in particular that there is an equivalence 
$$
t_0(\mathrm{Map}_{\mathrm{dSt}_R}(E, X)) \simeq t_0(X)
$$
As a consequence, there are identifications 
$$\mathrm{Map}_{\mathrm{dSt}_R}^0(E, X) = \mathrm{Map}_{\mathrm{dSt}_R}(E, X) \simeq \mathrm{Map}_{\mathrm{dSt}_R}(\mathrm{Aff}(E), X).$$
Combining this with Corollary \ref{comparison} we obtain the following chain of equivalences 
$$
\mathrm{HH}_E(X)   = \mathcal{O}(\mathrm{Map}_{\mathrm{dSt}_R}^0(E, X) )  \simeq 
\mathcal{O}(\mathrm{Map}_{\mathrm{dSt}_R}(\mathrm{Aff}(E), X) )
\simeq 
\mathcal{O}(\mathrm{Map}_{\mathrm{dSt}_R}(S^1_{\widehat{E}}, X) )
=  \mathrm{HH}^{\widehat{E}}_*(X)$$
and this concludes the proof.
\end{proof}
Next, we are  going to turn attention to the nodal and cuspidal cubic curves, which we will denote $\mathcal{N}$ and $\mathcal{C}$  respectively. 
Our next result is an analogue of Corollary \ref{comparison} for these  objects. In this setting, it is possible to argue in a different and  more direct  way than in Corollary \ref{comparison}. Even if the proof we shall give is quicker, it would be interesting to have also an alternative argument which is more directly akin to the smooth elliptic curve case.  We believe that the main lines of the argument leading up to Corollary \ref{comparison} could be adapted to the nodal and cuspidal cases. There is however an important technical input which would have to be established in that setting, namely an analogue of  Theorem \ref{FML}. We  comment on this point in some more detail in Remark \ref{altproof} below.

\begin{proposition}
\label{nodalcusp}
There are equivalences of coaffine stacks
$$
\mathrm{Aff}(\mathcal{N}) \simeq S^1_{\widehat{\mathbb{G}_m}} \simeq \mathrm{Aff}(S^1_{\mathrm{B}}), \quad \mathrm{Aff}(\mathcal{C}) \simeq S^1_{\widehat{\mathbb{G}_a}}
$$
\end{proposition}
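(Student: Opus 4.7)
The plan is to compute $\mathrm{C}^*(\mathcal{N}, \mathcal{O})$ and $\mathrm{C}^*(\mathcal{C}, \mathcal{O})$ as coconnective $\mathbb{E}_\infty$-algebras, match them with $\mathcal{O}(S^1_{\widehat{\mathbb{G}_m}})$ and $\mathcal{O}(S^1_{\widehat{\mathbb{G}_a}})$ as recorded in the bullet list of the excerpt, and then apply $\mathrm{coSpec}$. The key structural input is that both $\mathcal{N}$ and $\mathcal{C}$ arise from $\mathbb{P}^1$ by pinching closed subschemes, so that their cohomology of $\mathcal{O}$ is controlled by the cohomology of $\mathbb{P}^1$ together with the gluing data.

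First I would exhibit the two curves as pushouts in $\mathrm{St}_R$. The normalization $\nu:\mathbb{P}^1 \to \mathcal{N}$ identifies $0,\infty \in \mathbb{P}^1$ to a single point, giving
\begin{equation*}
\mathcal{N} \simeq \mathbb{P}^1 \sqcup_{\mathrm{Spec}(R)\,\sqcup\,\mathrm{Spec}(R)} \mathrm{Spec}(R),
\end{equation*}
whereas the normalization of $\mathcal{C}$ collapses a first-order infinitesimal neighborhood at a point, giving
\begin{equation*}
\mathcal{C} \simeq \mathbb{P}^1 \sqcup_{\mathrm{Spec}(R[\epsilon]/\epsilon^2)} \mathrm{Spec}(R).
\end{equation*}
Since $\mathrm{C}^*(-,\mathcal{O})$ is left adjoint to $\mathrm{coSpec}$, it preserves colimits; using $\mathrm{C}^*(\mathbb{P}^1, \mathcal{O}) \simeq R$ (from the vanishing $H^{>0}(\mathbb{P}^1,\mathcal{O}) = 0$), these pushouts become
\begin{equation*}
\mathrm{C}^*(\mathcal{N}, \mathcal{O}) \simeq R \times_{R \times R} R, \qquad \mathrm{C}^*(\mathcal{C}, \mathcal{O}) \simeq R \times_{R[\epsilon]/\epsilon^2} R
\end{equation*}
of coconnective $\mathbb{E}_\infty$-algebras. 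A direct Mayer--Vietoris computation then identifies these pullbacks: in the nodal case the fold $R \oplus R \to R \times R$ has kernel and cokernel both equal to $R$, producing an algebra with $H^0 = H^1 = R$ and trivial cup product on the degree one class, which matches $\mathrm{C}^*(S^1_{\mathrm{B}},\mathcal{O})$; combined with the equivalence $\mathrm{Aff}(S^1_{\mathrm{B}}) \simeq S^1_{\widehat{\mathbb{G}_m}}$ already recorded in the excerpt, applying $\mathrm{coSpec}$ yields $\mathrm{Aff}(\mathcal{N}) \simeq S^1_{\widehat{\mathbb{G}_m}}$. In the cuspidal case, the diagonal inclusion $R \oplus R \to R[\epsilon]/\epsilon^2$ has kernel $R$ and cokernel $R\epsilon \cong R$, so the pullback is the square-zero extension $R \oplus R[\eta]$ with $\eta$ in cohomological degree one; by the explicit description of $\mathcal{O}(B\widehat{\mathbb{G}_a}^\vee)$ in the excerpt this is precisely $\mathcal{O}(S^1_{\widehat{\mathbb{G}_a}})$, yielding $\mathrm{Aff}(\mathcal{C}) \simeq S^1_{\widehat{\mathbb{G}_a}}$.

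The hardest step should be verifying the two pushout descriptions in $\mathrm{St}_R$, so that the colimit-preservation of $\mathrm{C}^*(-,\mathcal{O})$ may be invoked. For $\mathcal{N}$ this is the classical Ferrand pinching along a disjoint pair of reduced points, which is well-documented; but for $\mathcal{C}$ the gluing locus is non-reduced, and one must check both that the scheme-theoretic pushout of $\mathbb{P}^1$ and a point along $\mathrm{Spec}(R[\epsilon]/\epsilon^2)$ exists and recovers the cuspidal cubic (rather than some infinitesimal thickening), and that this $1$-categorical pushout agrees with the homotopy pushout computed in the $\infty$-category $\mathrm{St}_R$. If either verification becomes awkward, an alternative route is to bypass the pushout entirely and compute $\mathrm{C}^*(\mathcal{N},\mathcal{O})$ and $\mathrm{C}^*(\mathcal{C},\mathcal{O})$ directly via the normalization short exact sequence $0 \to \mathcal{O}_X \to \nu_\ast \mathcal{O}_{\mathbb{P}^1} \to \mathcal{Q} \to 0$, which gives the same cohomology computation.
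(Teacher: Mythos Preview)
Your approach is essentially the same as the paper's: both rest on the pushout presentations of $\mathcal{N}$ and $\mathcal{C}$ obtained by pinching $\mathbb{P}^1$ along $\Spec(R)\sqcup\Spec(R)$ and $\Spec(R[\epsilon]/\epsilon^2)$ respectively, together with the fact that $\mathrm{C}^*(\mathbb{P}^1,\mathcal{O})\simeq R$. The paper packages the argument dually---it first writes down an explicit comparison map $c:\mathcal{N}\to S^1_{\mathrm{B}}$ (respectively, to $S^1_{\widehat{\mathbb{G}_a}}$) induced by the pushout diagrams, and then checks that $c^*$ induces an equivalence on $\mathrm{Map}(-,T)$ for every affine $T$; you instead apply $\mathrm{C}^*(-,\mathcal{O})$ directly to the pushout and then $\mathrm{coSpec}$ to the result. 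These are formally equivalent via the $(\mathrm{C}^*,\mathrm{coSpec})$-adjunction.

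One point worth tightening: in the nodal case you write that the fiber product ``has $H^0=H^1=R$ and trivial cup product on the degree one class, which matches $\mathrm{C}^*(S^1_{\mathrm{B}},\mathcal{O})$''. Over a general base $R$ an isomorphism of graded-commutative cohomology rings need not lift to an equivalence of $\mathbb{E}_\infty$-algebras, so this phrasing is too weak as stated. The fix is already implicit in your setup: since $S^1_{\mathrm{B}}$ is itself the pushout $\ast\sqcup_{\ast\sqcup\ast}\ast$, applying your left-adjoint argument gives $\mathrm{C}^*(S^1_{\mathrm{B}},\mathcal{O})\simeq R\times_{R\times R}R$ as the \emph{same} fiber product of $\mathbb{E}_\infty$-algebras, and the identification is then tautological. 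Likewise, in the cuspidal case you should argue that $R\times_{R[\epsilon]/\epsilon^2}R$ is the \emph{trivial} square-zero extension $R\oplus R[-1]$ as an $\mathbb{E}_\infty$-algebra (the standard loop-space identity $\Omega(R\oplus M)\simeq R\oplus M[-1]$ for split square-zero extensions), rather than merely matching underlying modules. With these adjustments your argument and the paper's are interchangeable.
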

\begin{proof}
The proofs in the two cases are very similar. Let us start from the nodal curve $\mathcal{N}$. The key observation is that $\mathcal{N}$ can be realized as the following push-out in schemes 
\begin{equation}
\label{nodal}
\begin{gathered}
\xymatrix{\Spec(R) \coprod \Spec(R) \ar[r]^-{\iota} \ar[d]& \mathbb{P}^1_R\ar[d]\\
\Spec(R)  \ar[r] & \mathcal{N}}
\end{gathered}
\end{equation}
where $\iota$ denotes the inclusion of two points in $\mathbb{P}^1_R$ as $0$ and $\infty$. This presentation  yields a map $c: \mathcal{N} \to S^1_{\mathrm{B}}$ via the following morphism of push-out diagrams
$$
\xymatrix{ \\
\Spec(R) \coprod \Spec(R) \ar[r]^-{\iota}  \ar@/^2pc/[rrr] \ar[d]& \mathbb{P}^1_R\ar[d] \ar@/^2pc/[rrr] & &   \Spec(R) \coprod \Spec(R) \ar[r] \ar[d] & \Spec(R) \ar[d] \\ 
\Spec(R)  \ar[r] \ar@/^2pc/[rrr] & \mathcal{N} 
\ar@{-->}@/^2pc/[rrr] && \Spec(R) \ar[r] &S^1_{\mathrm{B}}  
}
$$
where the curved arrows joining the two diagrams are the obvious ones. The claim is that $c: \mathcal{N} \to S^1_{\mathrm{B}}$ induces an equivalence between the affinizations. By the universal property of the affinization, this can be tested by showing that for any affine scheme $T$, the morphism 
$$
\mathcal{L} T = \mathrm{Map}_{\mathrm{dSt}_R}(S^1_{\mathrm{B}}, T) \stackrel{c^*}\to \mathrm{Map}_{\mathrm{dSt}_R}(\mathcal{N}, T)
$$
is an equivalence.

Since (\ref{nodal}) is a push-out in schemes, we obtain a pull-back square 
\[
\xymatrix{
\mathrm{Map}_{\mathrm{dSt}_R}(\mathcal{N}, T) \ar[r] \ar[d]& \mathrm{Map}_{\mathrm{dSt}_R}(\mathbb{P}^1, T) \ar[d]\\
T\ar[r]& T \times T }
\] 
Now, since $T$ is affine, we have that
$$
\mathrm{Map}_{\mathrm{dSt}_R}(\mathbb{P}^1, T) \simeq 
\mathrm{Map}_{\mathrm{dSt}_R}(\mathrm{Aff}(\mathbb{P}^1), T).
$$
Further, since $\mathcal{O}(\mathbb{P}^1_R) \simeq R$, it follows that 
$ 
\mathrm{Aff}(\mathbb{P}^1) \simeq \mathrm{Spec}(R).
$ 
As a result, we obtain equivalences 
$$\mathrm{Map}_{\mathrm{dSt}_R} (\mathcal{N}, T)  \simeq \mathcal{L}T \simeq \mathrm{Map}_{\mathrm{dSt}_R} (S^1_{\mathrm{B}}, T).$$
As a consequence, as we claimed, we have an equivalence $\Aff{\mathcal{N}}\simeq \Aff{S^1}$. 

The case of the cuspidal curve $\mathcal{C}$ is entirely analogous, so we shall give a more succinct treatment of the argument. 
In the category of schemes, we can realise the the cusp  as the pushout

\[\xymatrix{\Spec(R[T]/T^2) \ar[r] \ar[d]& \mathbb{P}^1\ar[d]\\
\Spec(R) \ar[r]&  \mathcal{C}}\]
Arguing as before, it is sufficient to show that for any affine scheme $T$ there is an equivalence 
$$
\mathrm{Map}_{\mathrm{dSt}_R}(\mathcal{C}, T) \simeq \mathbb{T}[-1]T
$$ 
where $\mathbb{T}[-1]T$ is the total space of the shifted tangent bundle of $T$. Recall that there is an equivalence 
$$
\mathrm{Map}_{\mathrm{dSt}_R}(\Spec(R[T]/T^2, T) \simeq \mathbb{T}T
$$
Then, as in the nodal case, the claim follows immediately from the fact that there is a pull-back diagram 
\[\xymatrix{\mathrm{Map}_{\mathrm{dSt}_R}(\mathcal{C}, T) \ar[r] \ar[d]&Y\ar[d]\\
Y\ar[r]& \mathbb{T}Y }\]
\end{proof}

\begin{remark}
\label{altproof}
We believe that the proof strategy adopted in the elliptic case could be pushed to cover also the setting of the nodal and cuspidal curves. In order to implement that argument however one would have to establish some preliminary results on Fourier--Mukai equivalences which as far as we know have not been established in the literature even in the classical setting $R=\mathbb{C}$.

Let us explain the story in the case of the nodal curve $\mathcal{N}$, as the situation for $\mathcal{C}$ is entirely analogous. Let $U$ be the complement of the node, and note that the choice of a point  $e$ in $U$ determines an isomorphism
$ \, 
U \cong \mathbb{G}_m
$. Let $i: \mathbb{G}_m \to \mathcal{N}$ the corresponding open embedding. 
 We believe that there should be a Fourier--Mukai equivalence  
$$
\mathrm{FM}_\mathcal{N}: \mathrm{QCoh}(\mathcal{N})  \to \mathrm{QCoh}(\mathcal{N})
$$
 with the following properties.  Let  $\star$ be the tensor product on $\mathrm{QCoh}(\mathcal{N})$ obtained by pulling back the ordinary tensor product along $\mathrm{FM}_\mathcal{N}$. Then:
 \begin{itemize}
 \item the skyscraper sheaf $\mathcal{O}_e$ is the unit for the $\star$-tensor product
 \item the push-forward  functor 
 $$
 i_*: \mathrm{QCoh}(\mathbb{G}_m)^{\otimes^{\mathrm{c}}}  \to \mathrm{QCoh}(\mathcal{N})^\star 
 $$ 
is symmetric monoidal, where $\otimes^{\mathrm{c}}$ denotes the convolution tensor product $\mathrm{QCoh}(\mathbb{G}_m)$
\end{itemize}
If this was the case, one could replicate the strategy we used to prove \ref{comparison} in order to show that 
$$
\mathrm{Aff}(\mathcal{N}) \simeq S^1_{\widehat{\mathbb{G}_m}}
$$
We stress that in fact it is not difficult to define a candidate for $\mathrm{FM}_\mathcal{N}$. Over a field, the ordinary Fourier--Mukai equivalence for elliptic curves 
$\mathrm{FM}_E$ can be factored as a composition of spherical functors. These spherical twists make  sense also in the setting of $\mathcal{N}$. So there is a very natural a definition of $\mathrm{FM}_\mathcal{N}$ and  it remains to check whether it has the desired properties.  
We believe that this circle of ideas merits further investigation, which we leave to future work.
\end{remark}

\section{TMF Hochschild homology}
\label{section:Ell}
We define next three universal versions of $\widehat{E}$-Hochschild homology in the spirit of the theory of topological modular forms. This yields global versions of the $\widehat{E}$-Hochschild homology from \cite{MRT} over $\cM_{ell}$, $\overline{\cM_{ell}}$ and $\cM_{cub}$. The key inputs are Corollary  \ref{compHH} and Proposition \ref{nodalcusp}, which allow us to model $\mathbb{G}$-Hochschild homology, when $\mathbb{G}$ is the completion of an algebraic group scheme, as functions on mapping stacks from algebraic curves. As we explained the approach of \cite{sibilla2023equivariant}, which we adopt here, has the advantage of giving a meaningful  theory also beyond the affine setting. Here, however,  we shall limit ourselves to consider affine schemes over $R$, as this is the setting originally considered in \cite{MRT}. Extensions of this picture to more general objects, and in particular quotient stacks, will appear in \cite{ChristianPaolo}.

We start by recalling a few generalities regarding the moduli theory of elliptic curves and cubics. A reference is \cite[Section 2.4]{Meierthesis} in the smooth situation, and \cite[Section 3]{Mathewtmf} in the general case. In this paper, we do not consider the spectral enhancements developed by Lurie in his series of papers \cite{ELL1}, \cite{ELL2} and \cite{ELL3}.
\begin{definition}
An elliptic curve over a scheme $B$ is a proper smooth morphism $\pi: E\to B$ equipped with a section $e:B\to E$ such that, for every geometric point $b:\Spec k\to B$, the fiber $E\times_B b$ is a connected curve of genus 1.
\end{definition}
Consider the functor 
$$F_{\cM_{ell}}:\CAlg^\heartsuit\to\cS$$
assigning to a commutative ring $R$ the groupoid of elliptic curves over $\Spec R$ and   isomorphisms between them. This functor is represented by a classical Deligne--Mumford stack $\cM_{ell}$. There are some important variants of this basic moduli stack. The first is the modular compactification of $\cM_{ell}$, $\overline{\cM_{ell}}$. This is a Deligne--Mumford stack proper over $\Spec \bZ$, which classifies proper, flat morphisms of finite presentation over $R$ whose geometric fibers are curves of arithmetic genus 1 which are either smooth or have one nodal singularity, and are equipped with a marked point that lies in their smooth locus \cite[Definition 2.6]{Mathewtmf}. The other moduli stack we will  consider is a version of $\overline{\cM_{ell}}$ which  additionally parametrizes the cuspidal cubic. This is the moduli stack of cubics considered in \cite[Section 3]{Mathewtmf}. We recall the definition below.
\begin{definition}
A cubic curve over a scheme $B$ is a proper flat morphism of finite presentation $\pi: E\to B$ together with a section $e:B\to E$ whose image is contained in the smooth locus, having the property that for every geometric point $b:\Spec k\to B$, the fiber $E\times_B b$ is a reduced irreducible curve of arithmetic genus 1. 
\end{definition}
Let $\cM_{cub}$ denote the algebraic stack representing the functor sending a commutative ring $R$ to the space of cubic curves over $\Spec R$ and their isomorphisms. We remark that $\cM_{cub}$   is not a Deligne--Mumford stack. We denote respectively 
$$
\cE^{un}, \quad \overline{\cE}^{un} \quad \text{and} \quad \cC^{un}
$$
the universal curves on $\cM_{ell}$, $\overline{\cM_{ell}}$ and $\cM_{cub}$.

Before proceeding, we need to fix some notations. Let $\mathcal{X}$ be a  derived stack over $R$.  We denote by $\mathrm{CAlg}_{\mathcal{X}}^{\mathrm{cn}}$ the $\infty$-category of connective commutative algebra objects in $\mathrm{QCoh}(\mathcal{X})$. If $\cX$ is equivalent to its truncation, i.e. it is an object in $\mathrm{St}_R$, we can define  coaffine stacks over $\cX$, and the affinization functor,  exactly as in the absolute case: if $Y \to \cX$ is in $\mathrm{St}_R/\cX$ we denote its affinization $\mathrm{Aff}_{\mathcal{X}}(Y)$. 
We denote by $\mathrm{Map}_{\mathrm{dSt}_R/\mathcal{X}}(-,-)$  the mapping stack in the $\infty$-category of stacks over $\mathcal{X}$. Let $Y_1 \to \mathcal{X}$ and $Y_2 \to \mathcal{X}$ be  stacks over $\mathcal{X}$. We define the stack of almost constant maps 
 as the connected component of the sections of $Y_2 \to \mathcal{X}$, and we let  
 $$
\pi: \mathrm{Map}_{\mathrm{dSt}_R/\mathcal{X}}^0(Y_1, Y_2) \to \mathcal{X}
 $$
be the structure morphism. An example which will be important for our next definition is obtained by setting $\mathcal{X}=\cM_{ell}$, $Y_1=\cE^{un}$ and $Y_2 = X \times \cM_{ell}$ where $X$ is a scheme. This gives the stack of almost constant maps from the universal elliptic curve to $X$
$$
\pi: \mathrm{Map}_{\mathrm{dSt}_R/\cM_{ell}}^0(\cE^{un}, X \times \cM_{ell}) \to \cM_{ell}
$$

\begin{definition}
\label{TMFTMF}
We define sheafified $\mathrm{TMF}$-Hochschild homology to be the functor
$$
\mathcal{HH}^{\mathrm{TMF}}_* : \mathrm{CAlg}_{R}^{\mathrm{cn}} \to \mathrm{CAlg}_{  \cM_{ell} }^{\mathrm{cn}}  \, ,   \quad A \mapsto \mathcal{HH}^{\mathrm{TMF}}_*(A):=\pi_*(\mathcal{O}_{\mathrm{Map}_{\mathrm{dSt}_R/\cM_{ell}}^0(\cE^{un} \, , \,   \mathrm{Spec}(A) \times \cM_{ell})})
$$ 
We denote by $\mathrm{HH}^{\mathrm{TMF}}_*(A)$ the global sections of  $\mathcal{HH}^{\mathrm{TMF}}_*(A)$. We define $\mathrm{TMF}$-Hochschild homology to be the functor
$$
\mathrm{HH}^{\mathrm{TMF}}_*: \mathrm{CAlg}_{R}^{\mathrm{cn}} \to \mathrm{CAlg}_{R}   \, ,   \quad A \mapsto \mathrm{HH}^{\mathrm{TMF}}_*(A) 
$$ 
\end{definition}

Note that since $\mathrm{Spec}(A) \times \cM_{ell}$ is affine over 
$\cM_{ell}$ there is an equivalence 
$$\mathrm{Map}_{\mathrm{dSt}_R/\cM_{ell}}^0(\cE^{un}, \mathrm{Spec}(A) \times \cM_{ell}) \simeq \mathrm{Map}_{\mathrm{dSt}_R/\cM_{ell}}^0(\mathrm{Aff}_{\cM_{ell}}(\cE^{un}), \mathrm{Spec}(A) \times \cM_{ell}) $$
where the morphism is induced by the natural map $\cE^{un}\to \mathrm{Aff}_{\cM_{ell}}(\cE^{un})$.  
Further, the stack of almost constant maps out of $\mathrm{Aff}_{\cM_{ell}}(\cE^{un})$ actually coincides with the full mapping stack
$$
\mathrm{Map}_{\mathrm{dSt}_R/\cM_{ell}}(\mathrm{Aff}_{\cM_{ell}}(\cE^{un}), \mathrm{Spec}(A) \times \cM_{ell})
$$
Thus restricting to affine schemes, as we do throughout the article, simplifies somewhat the    geometry  underpinning $\mathcal{HH}^{\mathrm{TMF}}_*$.

The following result shows that $\cHH_\ast^{\mathrm{TMF}}$   interpolates between different $\widehat{E}$-Hochschild homologies, where $\widehat{E}$ is the completion at the identity of an elliptic curve $E$, as $E$ varies in the moduli of elliptic curves. 
 We view therefore  $\cHH_\ast^{\mathrm{TMF}}$ as conceptually related to the filtered Hochschild homology considered in 
\cite{MRT} and \cite{moulinos2024filtered}, which is a global object over $[\mathbb{A}^1/\mathbb{G}_m]$  encoding a deformation  of $\widehat{G}$-Hochschild homology to Hodge Hochschild homology. In fact, we will be able to draw a closer connection between our theory and the picture in \cite{MRT} and \cite{moulinos2024filtered} below: this requires introducing a variant of TMF Hochschild homology which lives over the moduli of cubics.
\begin{proposition}\label{prop:fibersell}
Let $E:\Spec R\to\cM_{ell}$ be an elliptic curve and $X=\Spec(A)$ be an affine scheme over $\Spec R$. Then the fiber of $\cHH_\ast^{\mathrm{TMF}}(X)$ over $E$ is equivalent to $\mHH_\ast^{\widehat{E}}(X)$. In particular, we obtain a restriction map  
$$\mHH^{\mathrm{TMF}}_\ast(X)\to\mHH_\ast^{\widehat{E}}(X)$$
\end{proposition}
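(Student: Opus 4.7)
The plan is to reduce the statement to Corollary \ref{compHH} by identifying the fiber of $\mathcal{HH}^{\mathrm{TMF}}_*(X)$ at $E$ with $\mathrm{HH}_E(X)$ through a base change argument. The key geometric input is that the pullback of the universal curve $\mathcal{E}^{un} \to \mathcal{M}_{ell}$ along the classifying map $E: \Spec R \to \mathcal{M}_{ell}$ is canonically identified with $E$ itself. This is essentially tautological: the universal curve represents families of elliptic curves, so base-changing along a point classifying $E$ recovers $E$.

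First, I would assemble the diagram
\[
\xymatrix{
\mathrm{Map}^0_{\mathrm{dSt}_R}(E, X) \ar[r] \ar[d] & \mathrm{Map}^0_{\mathrm{dSt}_R/\mathcal{M}_{ell}}(\mathcal{E}^{un}, X \times \mathcal{M}_{ell}) \ar[d]^-{\pi} \\
\Spec R \ar[r]^-{E} & \mathcal{M}_{ell}
}
\]
and argue that it is Cartesian. The formation of relative mapping stacks is compatible with base change of the base, so the pullback $\mathrm{Map}_{\mathrm{dSt}_R/\mathcal{M}_{ell}}(\mathcal{E}^{un}, X \times \mathcal{M}_{ell}) \times_{\mathcal{M}_{ell}} \Spec R$ is computed as $\mathrm{Map}_{\mathrm{dSt}_R/\Spec R}(\mathcal{E}^{un}\times_{\mathcal{M}_{ell}} \Spec R, X \times \Spec R) \simeq \mathrm{Map}_{\mathrm{dSt}_R}(E, X)$. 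To restrict to the almost-constant components, note that under the pullback a family of constant maps restricts to constant maps, and conversely every constant map $E \to X$ extends to the corresponding constant section of $X \times \mathcal{M}_{ell}$ over the chosen point, so the clopen stack of almost constant maps pulls back to the stack of almost constant maps over $\Spec R$.

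Next, I would apply base change for the pushforward of the structure sheaf along $\pi$. Since $X \times \mathcal{M}_{ell}$ is affine over $\mathcal{M}_{ell}$, the mapping stack in the upper right is affine over $\mathcal{M}_{ell}$, and therefore base change holds without further hypotheses: we obtain
\[
E^* \pi_* \bigl(\mathcal{O}_{\mathrm{Map}^0_{\mathrm{dSt}_R/\mathcal{M}_{ell}}(\mathcal{E}^{un}, X \times \mathcal{M}_{ell})}\bigr)\simeq \mathcal{O}\bigl(\mathrm{Map}^0_{\mathrm{dSt}_R}(E, X)\bigr) = \mathrm{HH}_E(X),
\]
the final equality being the definition of $\mathrm{HH}_E(X)$. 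Applying Corollary \ref{compHH} then yields the identification $\mathrm{HH}_E(X) \simeq \mathrm{HH}^{\widehat{E}}_*(X)$, which is precisely the fiber claim. The restriction map in the proposition is then obtained by taking global sections of the unit map for the pullback-pushforward adjunction, i.e.\ from $\mathcal{HH}^{\mathrm{TMF}}_*(X) \to E_* E^*\mathcal{HH}^{\mathrm{TMF}}_*(X)$ on global sections.

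The only step requiring care is the verification that ``almost constant'' is preserved by the base change of mapping stacks; here one uses that the locus of constant sections in the mapping stack is functorial in the base, and that the notion of clopen substack determined by a connected component is stable under pullback along an arbitrary morphism of stacks. Beyond this, everything reduces to standard functoriality of mapping stacks and base change of pushforwards along affine morphisms, so no substantial technical obstacle arises.
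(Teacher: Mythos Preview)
Your overall strategy matches the paper's: set up a Cartesian square over $\mathcal{M}_{ell}$, apply base change for $\pi_*\mathcal{O}$, and invoke the comparison with $\mathrm{HH}^{\widehat{E}}_*$. The difference lies in how the base change step is justified, and here your argument has a genuine gap.

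You write: ``Since $X \times \mathcal{M}_{ell}$ is affine over $\mathcal{M}_{ell}$, the mapping stack in the upper right is affine over $\mathcal{M}_{ell}$, and therefore base change holds without further hypotheses.'' The first implication is not a general fact: affineness of the target over the base does not by itself force the relative mapping stack to be affine over the base. The paper treats exactly this point as the main technical content of the proof. It first replaces $\mathcal{E}^{un}$ by its relative affinization (equivalently, works with $\mathrm{Aff}(E)$ on the fiber), then invokes \cite[Proposition 3.10]{ben2010integral}, which requires $\pi$ to be perfect. To verify this, the paper shows the stronger statement that $\pi$ is affine by checking that each fiber $\mathrm{Map}_{\mathrm{dSt}_R}(\mathrm{Aff}(E),X)$ is a derived affine scheme: the truncation is $X$, and the remaining hypotheses of Artin--Lurie representability (global cotangent complex, obstruction theory, nil-completeness) hold because, via Corollary~\ref{comparison}, the source $\mathrm{Aff}(E)\simeq S^1_{\widehat{E}}=B\widehat{E}^\vee$ is the classifying stack of an affine group scheme, so one can appeal to \cite[Proposition 5.1.2]{MRT}. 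Without this step your base change assertion is unsupported.

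A secondary remark: by passing to $\mathrm{Aff}_{\mathcal{M}_{ell}}(\mathcal{E}^{un})$ the paper avoids having to track the ``almost constant'' clopen locus through the base change, since (as noted just before the proposition) for affine targets the almost-constant mapping stack out of the affinization already coincides with the full mapping stack. Your discussion of that point is fine, but it becomes unnecessary once one works with the affinized source.
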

\begin{proof}
The statement follows from base-change along the following pullback diagram:
$$
\xymatrix{
\mathrm{Map}_{\mathrm{dSt}_R}(\mathrm{Aff}(E),X) \ar[r]^-{E'}\ar[d]^{\pi'} & \mathrm{Map}_{\mathrm{dSt}_R/\cM_{ell}}(\mathrm{Aff}_{\cM_{ell}}(\cE^{un}), X \times \cM_{ell}) \ar[d]^{\pi} \\
\Spec R\ar[r]^{E} & \cM_{ell}
}
$$
Indeed, if we apply base change to the structure sheaf of $\mathrm{Map}_{\mathrm{dSt}_R/\cM_{ell}}(\mathrm{Aff}_{\cM_{ell}}(\cE^{un}), X \times \cM_{ell})$ we obtain an equivalence
$$
E^*(\cHH_\ast^{\mathrm{TMF}}(X)) \simeq \mathcal{O}(\mathrm{Map}_{\mathrm{dSt}_R}(\mathrm{Aff}(E),X))$$
and the latter coincides with $\mHH_\ast^{\widehat{E}}(X)$ 
 by Corollary 
\ref{compHH}.

In order to prove that the Beck--Chevalley maps 
$$E^\ast\pi_\ast\to(\pi')_\ast (E')^\ast$$
are isomorphisms, we want to apply \cite[Proposition 3.10]{ben2010integral} to the map $\pi$. To do so, we need to show $\pi$ is a perfect map; we will in fact prove the stronger statement that $\pi$ is affine. By the pullback diagram above, it is sufficient to observe that the derived stack
$$\mathrm{Map}_{\mathrm{dSt}_R}(\mathrm{Aff}(E),X)$$
is in fact a derived affine scheme. The proof is essentially identical to that of \cite[Proposition 5.1.2]{MRT}. First of all, its truncation is $X$, hence a derived affine scheme. Moreover, the necessary extra conditions required by Artin--Lurie representability (i.e. having a global cotengent complex, admitting an obstruction theory and being nil-complete) are satisfied by mapping stacks with sources classifying stacks of group schemes. By Corollary \ref{comparison} we have that $\Aff{E}\simeq S^1_{\widehat{E}}$ and so is precisely of that form.
\end{proof}

\begin{remark}\label{remark:relativeaff}
In the spirit of \cite{MRT}, we could define the $\mathrm{TMF}$-circle as 
$$S^1_{\mathrm{TMF}}:=B_{/\cM_{ell}}(\widehat{\cE^{un}})^\vee$$
where $(\widehat{\cE^{un}})^\vee$ denotes the  relative Cartier dual  of the formal completion of the universal elliptic curve at its unit section, and $B_{/\cM_{ell}}(-)$ is the relative classifying stack. Here we are taking for granted that setting  up a theory of Cartier duality over a base stack, and in particular over $\cM_{ell}$, is indeed feasible; but of course this would have to be worked out carefully. If this works, we should expect that an analogue of  
 Corollary \ref{comparison} would hold in this setting: i.e. we should have  an equivalence 
 $$S^1_{\mathrm{TMF}} \simeq \mathrm{Aff}_{\cM_{ell}}(\cE^{un})$$ 
 Then we could give an alternative but equivalent definition of 
 $\cHH_\ast^{\mathrm{TMF}}$ in terms of $S^1_{\mathrm{TMF}}$. Namely, in keeping with the approach in  \cite{MRT}, if $X$ is an affine scheme we could set 
  $$\cHH_\ast^{\mathrm{TMF}}(X):=\cO(\mathrm{Map}_{\mathrm{dSt}_R/\cM_{ell}}(S^1_{\mathrm{TMF}},X\times\cM_{ell})).$$
  As already discussed however, extending the theory to non-affine setting would require taking the approach of Definition \ref{TMFTMF}.\end{remark}
 

We  conclude this section by extending our constructions to the setting of the modular compactification $\overline{\cM_{ell}}$ of $\cM_{ell}$ and to the moduli stack of cubics $\cM_{cub}$.  
\begin{definition}
We define sheafified $\mathrm{Tmf}$-Hochschild homology to be the functor
$$
\mathcal{HH}^{\mathrm{Tmf}}_* : \mathrm{CAlg}_{R}^{\mathrm{cn}} \to \mathrm{CAlg}_{  \overline{\cM_{ell}} }^{\mathrm{cn}}  \, ,   \quad A \mapsto \mathcal{HH}^{\mathrm{Tmf}}_*(A):=\pi_*(\mathcal{O}_{\mathrm{Map}_{\mathrm{dSt}_R/\overline{\cM_{ell}}}^0(\overline{\cE}^{un} \, , \, \mathrm{Spec}(A) \times \overline{\cM_{ell}})})
$$ 
We denote by $\mathrm{HH}^{\mathrm{Tmf}}_*(A)$ the global sections of $\mathcal{HH}^{\mathrm{Tmf}}_*(A)$. We define $\mathrm{Tmf}$-Hochschild homology to be the functor
$$
\mathrm{HH}^{\mathrm{Tmf}}_*: \mathrm{CAlg}_{R}^{\mathrm{cn}} \to \mathrm{CAlg}_{R}   \, ,   \quad A \mapsto \mathrm{HH}^{\mathrm{Tmf}}_*(A) 
$$ 
\end{definition}
Similarly to the Tmf-case, we can formulate an analogous definition over the moduli stack of cubics.
\begin{definition}
We define sheafified $\mathrm{tmf}$-Hochschild homology to be the functor
$$
\mathcal{HH}^{\mathrm{tmf}}_* : \mathrm{CAlg}_{R}^{\mathrm{cn}} \to \mathrm{CAlg}_{  \cM_{cub} }^{\mathrm{cn}}  \, ,   \quad A \mapsto \mathcal{HH}^{\mathrm{tmf}}_*(A):=\pi_*(\mathcal{O}_{\mathrm{Map}_{\mathrm{dSt}_R/\cM_{cub}}^0(\cC^{un} \, , \, \mathrm{Spec}(A) \times \cM_{cub})})
$$ 
We denote by $\mathrm{HH}^{\mathrm{tmf}}_*(A)$ the global sections of $\mathcal{HH}^{\mathrm{tmf}}_*(A)$. We define $\mathrm{tmf}$-Hochschild homology to be the functor
$$
\mathrm{HH}^{\mathrm{tmf}}_*: \mathrm{CAlg}_{R}^{\mathrm{cn}} \to \mathrm{CAlg}_{R}   \, ,   \quad A \mapsto \mathrm{HH}^{\mathrm{tmf}}_*(A) 
$$ 
\end{definition}

We would like to have a similar description of the fibers of $\cHH_\ast^{\mathrm{tmf}}$ as the one Proposition \ref{prop:fibersell} gives over the moduli stack of elliptic curves. In order to achieve this goal, we focus first on the case of curves over algebraically closed fields $k$. We do so because there is a complete classification of cubic curves over algebraically closed fields: these can only be elliptic curves, or curves with a single nodal singularity or a single cuspidal singularity. Moreover, the formal completion at the unit section is always one of the possible types: it is the formal additive group $\widehat{\bG}_a$ for the cuspidal curve, the formal multiplicative group $\widehat{\bG}_m$ for the nodal curve, and in the smooth case it is an elliptic formal group. In this case, the same arguments of Proposition \ref{prop:fibersell} prove the following

\begin{proposition}\label{prop:algclosed}
Let $k$ be an algebraically closed field and $X$ be an affine scheme over $k$. We have the following:
\begin{enumerate}
\item
Let $C:\Spec k\to\overline{\cM_{ell}}$ be a generalized elliptic curve over $k$.
The fiber of $\cHH_\ast^{\mathrm{Tmf}}(X)$ over $C$ is 
\begin{itemize}
\item $\mHH_\ast^{\widehat{\bG}_m}(X)$ if $C$ is a nodal cubic
\item $\mHH_\ast^{\widehat{E}}(X)$ if $C=E$ is an elliptic curve.
\end{itemize}
\item
Let $C:\Spec k\to\cM_{cub}$ be a cubic curve over $k$.
The fiber of $\cHH_\ast^{\mathrm{tmf}}(X)$ over $C$ is 
\begin{itemize}
\item $\mHH_\ast^{\widehat{\bG}_a}(X)$ if $C$ is a cuspidal cubic
\item $\mHH_\ast^{\widehat{\bG}_m}(X)$ if $C$ is a nodal cubic
\item $\mHH_\ast^{\widehat{E}}(X)$ if $C=E$ is an elliptic curve.
\end{itemize}
\end{enumerate}
\end{proposition}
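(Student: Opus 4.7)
The plan is to mimic essentially verbatim the argument used in the proof of Proposition \ref{prop:fibersell}, applied separately at each of the three types of closed points of the respective moduli stacks. For part $(1)$, given $C: \Spec k \to \overline{\cM_{ell}}$, I form the pullback square
$$
\xymatrix{
\mathrm{Map}_{\mathrm{dSt}_k}^0(C, X) \ar[r]^-{C'} \ar[d]^{\pi'} & \mathrm{Map}_{\mathrm{dSt}_k/\overline{\cM_{ell}}}^0(\overline{\cE}^{un}, X \times \overline{\cM_{ell}}) \ar[d]^{\pi} \\
\Spec k \ar[r]^-{C} & \overline{\cM_{ell}}
}
$$
and likewise for part $(2)$ with $\cM_{cub}$ and $\cC^{un}$ in place of $\overline{\cM_{ell}}$ and $\overline{\cE}^{un}$. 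Base change along this square, applied to $\cO$ of the top right stack, will give $C^*(\cHH^{\mathrm{Tmf}}_\ast(X)) \simeq \cO(\mathrm{Map}_{\mathrm{dSt}_k}^0(C, X))$, and analogously in the $\cM_{cub}$ case.

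To identify the right-hand side with the desired $\widehat{\bG}$-Hochschild homology in each case, I invoke Corollary \ref{comparison} when $C = E$ is smooth, and Proposition \ref{nodalcusp} when $C$ is the nodal or cuspidal cubic; these give respectively $\mathrm{Aff}(E) \simeq S^1_{\widehat{E}}$, $\mathrm{Aff}(\cN) \simeq S^1_{\widehat{\bG_m}}$ and $\mathrm{Aff}(\cC) \simeq S^1_{\widehat{\bG_a}}$. Since $X = \Spec(A)$ is affine, the natural map $\mathrm{Map}_{\mathrm{dSt}_k}(\mathrm{Aff}(C), X) \to \mathrm{Map}_{\mathrm{dSt}_k}(C, X)$ is an equivalence, and as in the proof of Corollary \ref{compHH} the truncation of $\mathrm{Map}_{\mathrm{dSt}_k}(C, X)$ reduces to $X$, so this coincides with its connected component of almost constant maps. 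Stringing these identifications together yields the claimed fibers, and the restriction maps follow by functoriality of base change.

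The one step that requires genuine verification, as it did in Proposition \ref{prop:fibersell}, is that $\pi$ is perfect in the sense of \cite{ben2010integral}, so that the Beck--Chevalley map $C^\ast \pi_\ast \to \pi'_\ast (C')^\ast$ is an equivalence. As there, I would argue for the stronger statement that $\pi$ is affine by showing fiberwise that $\mathrm{Map}_{\mathrm{dSt}_k}(\mathrm{Aff}(C), X)$ is a derived affine scheme using Artin--Lurie representability: its truncation is $X$, and since $\mathrm{Aff}(C)$ has the form $B_{/\Spec k}(\widehat{C})^\vee$ for the appropriate relative Cartier dual, the standard mapping-stack criteria (existence of cotangent complex, obstruction theory, nilcompleteness) apply exactly as in \cite[Proposition 5.1.2]{MRT}. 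This is the main technical obstacle, since the universal curves $\overline{\cE}^{un}$ and $\cC^{un}$ have non-smooth fibers over the cusp/node loci; however the argument only uses the fibers pointwise, and the identifications from Proposition \ref{nodalcusp} reduce the question to the affine group schemes $\widehat{\bG_m}^\vee$ and $\widehat{\bG_a}^\vee$, for which the analysis is classical.
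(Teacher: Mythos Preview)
Your proposal is correct and follows essentially the same approach as the paper, which simply states that ``the same arguments of Proposition \ref{prop:fibersell}'' prove the result. You have spelled out those arguments in detail, correctly invoking Proposition \ref{nodalcusp} for the nodal and cuspidal fibers in place of Corollary \ref{comparison}.
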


The case of general base rings $R$ is more complicated, as there is no trichotomy in the classification of cubic curves. Nevertheless, we have the following statement, proved by the same arguments of Proposition \ref{prop:fibersell}:\begin{proposition}
Let $R$ be a ring and $X$ be an affine scheme over $R$. We have the following:
\begin{enumerate}
\item
Let $C:\Spec R\to\overline{\cM_{ell}}$ be the either nodal curve over $R$ or an elliptic curve over $R$.
The fiber of $\cHH_\ast^{\mathrm{Tmf}}(X)$ over $C$ is 
\begin{itemize}
\item $\mHH_\ast^{\widehat{\bG}_m}(X)$ if $C=\cN$ is the nodal curve
\item $\mHH_\ast^{\widehat{E}}(X)$ if $C=E$ is an elliptic curve.
\end{itemize}
\item
Let $C:\Spec R\to\cM_{cub}$ be either the nodal or the cuspidal curve over $R$, or an elliptic curve over $R$.
The fiber of $\cHH_\ast^{\mathrm{tmf}}(X)$ over $C$ is 
\begin{itemize}
\item $\mHH_\ast^{\widehat{\bG}_a}(X)$ if $C=\cC$ is a the cuspidal curve
\item $\mHH_\ast^{\widehat{\bG}_m}(X)$ if $C=\cN$ is the nodal curve
\item $\mHH_\ast^{\widehat{E}}(X)$ if $C=E$ is an elliptic curve.
\end{itemize}
\end{enumerate}
\end{proposition}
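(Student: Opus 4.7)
The plan is to replay the argument of Proposition \ref{prop:fibersell} in each of the three cases, with the pair $(\cM_{ell}, \cE^{un})$ replaced either by $(\overline{\cM_{ell}}, \overline{\cE}^{un})$ for $\cHH_\ast^{\mathrm{Tmf}}$, or by $(\cM_{cub}, \cC^{un})$ for $\cHH_\ast^{\mathrm{tmf}}$. I will sketch the argument for $\cM_{cub}$, as the $\overline{\cM_{ell}}$ case is formally identical (and only uses two of the three subcases).

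Given a cubic curve $C:\Spec R \to \cM_{cub}$ (for us either $\cN$, $\cC$, or an elliptic curve $E$), I would form the pullback square
$$
\xymatrix{
\mathrm{Map}_{\mathrm{dSt}_R}(\mathrm{Aff}(C),X) \ar[r]^-{C'}\ar[d]^{\pi'} & \mathrm{Map}_{\mathrm{dSt}_R/\cM_{cub}}(\mathrm{Aff}_{\cM_{cub}}(\cC^{un}), X \times \cM_{cub}) \ar[d]^{\pi} \\
\Spec R\ar[r]^{C} & \cM_{cub}
}
$$
and invoke base change along $\pi$, in the guise of \cite[Proposition 3.10]{ben2010integral}, to identify the fiber $C^\ast \cHH_\ast^{\mathrm{tmf}}(X)$ with $\cO(\mathrm{Map}_{\mathrm{dSt}_R}(\mathrm{Aff}(C), X))$, just as in Proposition \ref{prop:fibersell}.

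Next I would plug in the identifications of the affinizations of the three types of cubics already established in this paper: Proposition \ref{nodalcusp} gives $\mathrm{Aff}(\cN) \simeq S^1_{\widehat{\bG}_m}$ and $\mathrm{Aff}(\cC) \simeq S^1_{\widehat{\bG}_a}$, while Corollary \ref{comparison} gives $\mathrm{Aff}(E) \simeq S^1_{\widehat{E}}$. Combining these with the base-change formula exhibits each fiber as $\mHH_\ast^{\widehat{\bG}}(X)$ for the appropriate formal group $\widehat{\bG}$, which is precisely the content of the statement. The $\overline{\cM_{ell}}$ version drops the cuspidal subcase but is otherwise verbatim.

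The main technical point, and the hardest step, is justifying the Beck--Chevalley isomorphism used above, which requires $\pi$ to be a perfect morphism. Following Proposition \ref{prop:fibersell}, I would prove the stronger statement that $\pi$ is affine: via the pullback square this reduces to showing that for each $C$ the derived stack $\mathrm{Map}_{\mathrm{dSt}_R}(\mathrm{Aff}(C), X)$ is a derived affine scheme. The truncation is $X$ in all three cases via the unit section of $C$, and the remaining hypotheses of Artin--Lurie representability (global cotangent complex, obstruction theory, nilcompleteness) hold because $\mathrm{Aff}(C)\simeq S^1_{\widehat{\bG}}$ is the classifying stack of the Cartier dual of a one-dimensional formal group, so the argument of \cite[Proposition 5.1.2]{MRT} applies uniformly to each of the three cases.
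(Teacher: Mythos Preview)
Your proposal is correct and follows exactly the approach the paper takes: it states that the proposition is ``proved by the same arguments of Proposition \ref{prop:fibersell}'', and you have faithfully spelled out those arguments, replacing Corollary \ref{comparison} by Proposition \ref{nodalcusp} in the nodal and cuspidal cases as needed.
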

%
For more general cubics $C:\Spec R\to\cM_{cub}$, the situation is more complicated. Indeed, the fiber of $\cHH_\ast^{\mathrm{tmf}}$ over $C$ is given by
$$\cO(\mathrm{Map_{/\Spec R}}(\Aff{C},X))$$
and in particular we have a restriction map
$$\mHH_\ast^{\mathrm{tmf}}(X):=\cO(\mathrm{Map}_{/\cM_{cub}}^0(\cC^{un},X\times\cM_{cub}))\to\cO(\mathrm{Map_{/\Spec R}}(\Aff{C},X))$$
Note that, as remarked in \cite{Mathewtmf}, the formal completion of any cubic curve $C:\Spec R\to\cM_{cub}$ is a formal group over $R$; let us denote it $\widehat{C}$.  In particular, it is possible to consider the associated Hochschild homology theory for affine schemes $X$ over $R$
$$\mHH_\ast^{\widehat{C}}(X):=\cO(\mathrm{Map}_{/\Spec R}(B\widehat{C}^\vee,X))$$
in the sense of \cite{MRT}.
We strongly believe there to be an equivalence
$$\mHH_\ast^{\widehat{C}}(X)\simeq\cO(\mathrm{Map_{/\Spec R}}(\Aff{C},X))$$
induced by an equivalence $S^1_{\widehat{C}}:=B\widehat{C}^\vee\simeq\Aff{C}$, but our techniques do not generalise immediately to the non-smooth case. Indeed, there is no Fourier--Mukai duality for cubics over general base rings, as opposed to the situation of elliptic curves. The existence of such a theory is an interesting question in its own right, and nontrivial even over algebraically closed fields of characteristic zero. We leave it as a conjecture, which is a generalization of the discussion in Remark \ref{altproof}:
\begin{conjecture}
Let $C:\Spec R\to \cM_{cub}$ be a cubic curve. There is a Fourier--Mukai symmetric monoidal equivalence
$$\QCoh(C)^\otimes\simeq\QCoh(C)^\star$$
where $\star$ denotes an exotic symmetric monoidal structure on $\QCoh(C)$ extending the convolution tensor product on quasi-coherent sheaves on the smooth locus of $C$. In particular, there is an isomorphism
$$\Aff{C}\simeq S^1_{\widehat{C}}$$
\end{conjecture}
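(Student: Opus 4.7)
The strategy is to upgrade the fiberwise Fourier--Mukai equivalences already available on the smooth, nodal, and cuspidal strata to a coherent equivalence over all of $\cM_{cub}$, and then deduce the affinization equivalence by repeating the argument of Corollary \ref{comparison}. First I would treat each stratum separately. On the smooth locus $\cM_{ell}\subset\cM_{cub}$, Theorem \ref{FML}, applied relatively over $\cM_{ell}$ using Lurie's family version from \cite{ELL1}, already yields a symmetric monoidal equivalence $\mathrm{FM}_E$ sending the skyscraper at the unit to the structure sheaf. The remaining task is to construct an analogous functor over the boundary, where the cubic is either nodal or cuspidal, and to glue these into a single kernel over the universal square $\cC^{un}\times_{\cM_{cub}}\cC^{un}$.

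For the degenerate strata the plan is to implement the idea of Remark \ref{altproof} in families. Let $i:U\hookrightarrow C$ denote the inclusion of the smooth locus, so that $U$ is either $\bG_m$ or $\bG_a$ and carries its standard convolution product. I would define a candidate $\mathrm{FM}_C$ by extending the Poincar\'e kernel on $U\times U$ to $C\times C$ via derived pushforward along $i\times i$, and then verify that the resulting integral transform is invertible. Equivalently, one can factor the elliptic Poincar\'e kernel as a composition of spherical twists, check that the underlying spherical objects survive the degeneration of $E$ to $\cN$ or $\cC$, and define $\mathrm{FM}_C$ as the corresponding composition of twists; this second route has the advantage of being manifestly compatible with deformations and specializes uniformly from a universal construction over $\cM_{cub}$.

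With a candidate functor $\mathrm{FM}_C$ in hand, three properties must be verified: (a) $\mathrm{FM}_C$ is an equivalence of presentable stable $\infty$-categories; (b) it sends $\cO_C$ to the skyscraper $\cO_e$; and (c) the transported monoidal structure $\star$ extends the convolution from the smooth locus, in the sense that $i_*:\QCoh(U)^{\otimes^{\mathrm{c}}}\to\QCoh(C)^\star$ is symmetric monoidal. Property (a) would follow by combining the elliptic input with a direct computation at singular fibers using the pushout presentations of $\cN$ and $\cC$ already exploited in the proof of Proposition \ref{nodalcusp}; (b) and (c) are built into the choice of kernel. Once these are established, the argument of Corollary \ref{comparison} runs unchanged: the symmetric monoidal equivalence induces an identification of $\mathbb{E}_\infty$-algebras $\mathrm{Hom}_{\QCoh(C)^\star}(\cO_e,\cO_e)\simeq\mathrm{C}^\ast(C,\cO)$, and passing to $\mathrm{coSpec}$ together with Proposition \ref{prop:formalFM} produces the desired equivalence $\Aff{C}\simeq S^1_{\widehat{C}}$.

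The main obstacle is that the exotic monoidal structure $\star$ is not the convolution along a group law, because $C$ carries no global group structure once it becomes singular. As a consequence, there is no formula for $\star$ that is independent of the Fourier--Mukai functor itself, and one must construct the kernel before one knows what the tensor structure is supposed to be. The cleanest route seems to require producing a universal Poincar\'e-type bi-extension $\cC^{un}\times_{\cM_{cub}}\cC^{un}\to B\bG_m$ prolonging the classical elliptic Poincar\'e bundle across the locus of singular cubics; in the spirit of Arinkin's commutative group stack duality from \cite{donagi2008torus}, this bi-extension would automatically equip $\QCoh(C)$ with a self-dual structure of commutative and cocommutative Hopf category. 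Proving the existence and flatness of such a universal bi-extension, together with verifying its compatibility with the formal group convolution on the completion $\widehat{C}$ at the marked point, is in my view the principal technical difficulty; over a general discrete base $R$ one would then add a descent step to pass from the geometric-fiber picture to the statement in families.
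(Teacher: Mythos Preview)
The paper does not prove this statement: it is explicitly formulated as a conjecture, introduced with the words ``We leave it as a conjecture, which is a generalization of the discussion in Remark \ref{altproof}.'' There is therefore no proof in the paper to compare your proposal against.

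Your write-up is not a proof either, and you are evidently aware of this: you identify the construction of the universal kernel (or bi-extension) over $\cC^{un}\times_{\cM_{cub}}\cC^{un}$ as the ``principal technical difficulty,'' and you do not carry it out. What you have written is a reasonable research plan, and in fact it closely follows the hints the paper itself gives in Remark \ref{altproof}: factor the elliptic Fourier--Mukai functor as a composition of spherical twists, observe that the relevant spherical objects make sense on the singular cubics, and take the resulting composition as the candidate $\mathrm{FM}_C$. The paper says exactly this for $\cN$ and remarks that ``it remains to check whether it has the desired properties''; your proposal does not check them either.

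The one place where your outline goes beyond what the paper sketches is the suggestion to build a universal Poincar\'e-type bi-extension over $\cM_{cub}$ in the style of \cite{donagi2008torus}. That is a coherent idea, but note that the paper already flags a gap in Arinkin's framework even in the elliptic case (the identification $S^1_\bG\simeq\mathrm{Map}_{\mathrm{Ab}(\mathrm{dSt}_R)}(\bG,B\bG_m)$ is left as a conjecture), so invoking that machinery for singular cubics over a general base $R$ would require substantial new input. In short: your strategy is consonant with the paper's own expectations, but neither you nor the paper supplies the missing ingredients, and the statement remains open.
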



One of the main advantages of tmf-Hochschild homology over its counterparts Tmf- and TMF-Hochschild homology is the fact that it encodes degenerations both to standard Hochschild homology and to Hodge Hochschild homology. This aspect is shared with the filtered Hochschild homology of \cite{MRT}.
In this light, we view the two theories $\cHH_\ast^{\mathrm{tmf}}$ and $\mHH_\ast^{\mathrm{fil}}$ as parallel. 
We believe the parallelism between $\cHH_\ast^{\mathrm{tmf}}$ and $\mHH_\ast^{\mathrm{fil}}$ can be explained in terms of the degeneration of the nodal curve to the cuspidal curve, which can be viewed as a map
$$[\bA^1/\bG_{m}]\to \cM_{cub}$$
The pullback of the universal cubic curve over $[\bA^1/\bG_{m,}]$ is by definition a filtered stack: we call it the \emph{filtered nodal curve} $\cN_{\mathrm{fil}}$. We strongly believe that the filtered Hochschild homology of \cite{MRT} can be obtained as 
$$\mHH_\ast^{\mathrm{fil}}(X)=\cO(\mathrm{Map}_{[\bA^1/\bG_{m}]}(\mathrm{Aff}_{[\bA^1/\bG_{m}]}(\cN_{\mathrm{fil}}),X\times [\bA^1/\bG_{m}]))$$ 
We formulate the following
\begin{conjecture}
Let $R$ be a commutative ring, and 
$$[\bA^1/\bG_{m}]\to \cM_{cub}$$
be the map classifying the filtered nodal curve over $R$. Then its relative affinization is the filtered circle of \cite{MRT}
$$\mathrm{Aff}_{[\bA^1/\bG_{m}]}(\cN_{\mathrm{fil}})\simeq S^1_{\mathrm{fil}}$$
In particular, for an affine scheme $X$ over $R$, this induces an equivalence
$$\mHH_\ast^{\mathrm{fil}}(X)\simeq\cO(\mathrm{Map}_{[\bA^1/\bG_{m}]}(\mathrm{Aff}_{[\bA^1/\bG_{m}]}(\cN_{\mathrm{fil}}),X\times [\bA^1/\bG_m]))$$
\end{conjecture}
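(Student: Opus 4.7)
My plan is to prove the conjecture by globalising the pushout argument of Proposition \ref{nodalcusp} over the base $[\bA^1/\bG_m]$. The key geometric input, which I would establish first, is a $\bG_m$-equivariant pushout presentation of the filtered nodal curve: writing $t$ for the tautological coordinate on $\bA^1$ (of weight one under $\bG_m$), one expects
$$\cN_{\mathrm{fil}} \simeq \mathrm{colim}\Bigl(\bP^1_{[\bA^1/\bG_m]} \leftarrow \mathrm{Spec}_{[\bA^1/\bG_m]}\bigl(R[\epsilon]/(\epsilon^2 - t)\bigr) \to [\bA^1/\bG_m]\Bigr),$$
where the left-hand arrow interpolates between the inclusion $\{0,\infty\} \hookrightarrow \bP^1$ at $t=1$ (recovering the presentation of $\cN$ from Proposition \ref{nodalcusp}) and the first-order neighbourhood of a single point at $t=0$ (recovering the presentation of $\cC$). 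Verifying this requires unpacking the pullback of the universal cubic along $[\bA^1/\bG_m] \to \cM_{cub}$ at the level of a Weierstrass presentation, tracking the $\bG_m$-weights carefully.

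Granted the pushout, the strategy of Proposition \ref{nodalcusp} globalises. For any affine $R$-scheme $X$, mapping out of the pushout yields a pullback square of relative mapping stacks over $[\bA^1/\bG_m]$ whose corners are, after relative affinization, the trivial contribution $X \times [\bA^1/\bG_m]$ from $\bP^1$ (since $\cO(\bP^1_R) \simeq R$) and a filtered shifted tangent bundle object arising from $\mathrm{Map}(\mathrm{Spec}(R[\epsilon]/(\epsilon^2-t)), X)$, whose generic fiber is the derived loop space $\mathcal{L}X$ and whose special fiber is $\bT_X[-1]$. This is precisely the fiberwise behaviour encoded by $S^1_{\mathrm{fil}}$ via the HKR filtration of \cite{MRT}, so the relative affinization of $\cN_{\mathrm{fil}}$ should corepresent the same filtered family of loop stacks as $S^1_{\mathrm{fil}}$. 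Combined with the affineness argument of Proposition \ref{prop:fibersell} applied in the filtered setting (to show that the relevant relative mapping stack is affine over $[\bA^1/\bG_m]$, so that its structure sheaf recovers its relative affinization), this would yield the conjectural equivalence.

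The decisive technical step, which I expect to be the main obstacle, is a filtered, relative upgrade of Proposition \ref{prop:formalFM} intertwining the Rees-theoretic filtration on $\mathrm{Aff}_{[\bA^1/\bG_m]}(\cN_{\mathrm{fil}})$ with the Cartier-dual construction underlying $S^1_{\mathrm{fil}}$. Concretely, in Moulinos's theory the filtered circle arises as $B\widehat{\bG_m}_{\mathrm{fil}}^\vee$ for a filtered one-dimensional formal group $\widehat{\bG_m}_{\mathrm{fil}}$, and the pointwise comparisons $\mathrm{Aff}(\cN) \simeq S^1_{\widehat{\bG_m}}$ and $\mathrm{Aff}(\cC) \simeq S^1_{\widehat{\bG_a}}$ of Proposition \ref{nodalcusp} say exactly that the fibers of the relative formal completion $\widehat{\cN_{\mathrm{fil}}}$ agree with those of $\widehat{\bG_m}_{\mathrm{fil}}$. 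Upgrading these pointwise identifications to an equivalence of filtered formal groups, and then applying a relative Fourier–Mukai duality in the (mildly non-smooth) filtered setting, would close the argument. This Fourier–Mukai input is essentially the relative version of the Fourier–Mukai duality for cubics conjectured earlier in this section, and its proof will likely require either an explicit construction of the kernel on $\cN_{\mathrm{fil}} \times_{[\bA^1/\bG_m]} \cN_{\mathrm{fil}}$ or a spherical-twist factorisation in the spirit of Remark \ref{altproof}.
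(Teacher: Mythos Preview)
The paper does not prove this statement: it is explicitly labelled a \emph{conjecture}, and the sentence immediately following it says that work in progress of the third author with Forero~Pulido \cite{ChristianPaolo} will investigate it further. There is therefore no proof in the paper to compare your proposal against.

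That said, your outline is very much in the spirit of what the paper suggests. The paper's heuristic for the conjecture is precisely that the fiberwise identifications of Proposition~\ref{nodalcusp} (nodal fiber $\leadsto S^1_{\widehat{\bG_m}}$, cuspidal fiber $\leadsto S^1_{\widehat{\bG_a}}$) should globalise over $[\bA^1/\bG_m]$, and you have made this concrete by proposing a filtered pushout presentation of $\cN_{\mathrm{fil}}$. Two comments on the proposal itself. First, your scheme $\Spec(R[\epsilon]/(\epsilon^2-t))$ will not carry a $\bG_m$-action with $t$ in weight one unless $\epsilon$ has half-integral weight; you will need to adjust the weights (e.g.\ give $t$ weight two), and more seriously, over a general ring $R$ the fiber at $t=1$ is $\Spec(R[\epsilon]/(\epsilon^2-1))$, which is two disjoint copies of $\Spec R$ only when $2$ is invertible. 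Getting the correct equivariant model of the degenerating divisor is exactly the kind of subtlety that makes this a conjecture rather than a corollary. Second, you correctly identify that the pushout computation alone only pins down $\mathrm{Aff}_{[\bA^1/\bG_m]}(\cN_{\mathrm{fil}})$ up to its functor of points on affines, whereas matching it with the specific object $S^1_{\mathrm{fil}}$ of \cite{MRT} requires comparing with the filtered Cartier-dual construction; the paper agrees that this step is the crux, and ties it to the earlier conjecture on Fourier--Mukai duality for singular cubics (cf.\ Remark~\ref{altproof}).
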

Work in progress of the third author with Christian Forero Pulido \cite{ChristianPaolo} will investigate this conjecture further.


\providecommand{\bysame}{\leavevmode\hbox to3em{\hrulefill}\thinspace}
\providecommand{\MR}{\relax\ifhmode\unskip\space\fi MR }
\providecommand{\MRhref}[2]{%
  \href{http://www.ams.org/mathscinet-getitem?mr=#1}{#2}
}
\providecommand{\href}[2]{#2}

\end{document}